\documentclass[12pt]{amsart}

\title{The cycle structure of unicritical polynomials}
\author{Andrew Bridy}
\address{Department of Mathematics, Texas A\&M University}
\email{\href{mailto:andrewbridy@math.tamu.edu}{andrewbridy@math.tamu.edu}}
\author{Derek Garton}
\address{Fariborz Maseeh Department of Mathematics and Statistics, Portland State University}
\email{\href{mailto:gartondw@pdx.edu}{gartondw@pdx.edu}}
\date{\today}

\subjclass[2010]{Primary 37P05; Secondary 37P25, 11R32, 20B35}

\keywords{Arithmetic Dynamics, Finite Fields, Galois Theory, Wreath Products}

\usepackage{amssymb,url,MnSymbol,bbm}

\usepackage{hyperref}
\hypersetup{breaklinks=true,
colorlinks=true,
urlcolor=blue,
linkcolor=cyan
}

\usepackage[top=1in,bottom=1in,left=1in,right=1in]{geometry}

\usepackage[capitalise,nameinlink]{cleveref}

\makeatletter
\def\imod#1{\allowbreak\mkern8mu{\operator@font mod}\,\,#1}
\makeatother

\newcommand{\Z}{\ensuremath{{\mathbb{Z}}}}

\newcommand{\Q}{\ensuremath{{\mathbb{Q}}}}
\newcommand{\R}{\ensuremath{{\mathbb{R}}}}
\newcommand{\F}{\ensuremath{{\mathbb{F}}}}

\newcommand{\lv}{\ensuremath{\left\vert}}
\newcommand{\rv}{\ensuremath{\right\vert}}
\newcommand{\lp}{\ensuremath{\left(}}
\newcommand{\rp}{\ensuremath{\right)}}
\newcommand{\lb}{\ensuremath{\left\{}}
\newcommand{\rb}{\ensuremath{\right\}}}
\newcommand{\lc}{\ensuremath{\left[}}
\newcommand{\rc}{\ensuremath{\right]}}
\newcommand{\la}{\ensuremath{\left\langle}}
\newcommand{\ra}{\ensuremath{\right\rangle}}

\DeclareMathOperator{\Gal}{Gal}

\DeclareMathOperator{\Fix}{Fix}

\DeclareMathOperator{\M}{\mathbb{M}}
\DeclareMathOperator{\id}{id}

\theoremstyle{plain}
\newtheorem{finalmomentscor}{Corollary}[section]

\newtheorem{fixedpointprob}[finalmomentscor]{Theorem}
\newtheorem{fixedpointprobcor}[finalmomentscor]{Corollary}

\newtheorem{recursion}[finalmomentscor]{Lemma}\newtheorem*{namedthm}{\namedthmname}
\newcounter{namedthm}
\newtheorem{rootsandcycles}[finalmomentscor]{Proposition}
\newtheorem{cycledistribution}[finalmomentscor]{Theorem}

\newtheorem{momenttheorem}[finalmomentscor]{Theorem}
\newtheorem{momentswecareabout}[finalmomentscor]{Corollary}
\newtheorem{stirlinglemma}[finalmomentscor]{Lemma}
\newtheorem{bigtheorem}[finalmomentscor]{Theorem}

\newtheorem{thm}[finalmomentscor]{Theorem}
\newtheorem{disconnectedbox}[finalmomentscor]{Corollary}
\newtheorem{lotsofcycles}[finalmomentscor]{Corollary}
\newtheorem{whatisgamma}[finalmomentscor]{Conjecture}

\theoremstyle{remark}
\newtheorem{pmoments}[finalmomentscor]{Fact}
\newtheorem{rknfax}[finalmomentscor]{Remark}

\newtheorem{HITremark}[finalmomentscor]{Remark}
\newtheorem{FDTremark}[finalmomentscor]{Remark}
\newtheorem{momentremark}[finalmomentscor]{Remark}
\newtheorem{impossible}[finalmomentscor]{Remark}
\newtheorem{notsoimpossible}[finalmomentscor]{Remark}
\newtheorem{oldbox}[finalmomentscor]{Remark}

\theoremstyle{definition}
\newtheorem{rkn}[finalmomentscor]{Definition}

\newtheorem{komegan}[finalmomentscor]{Definition}

\makeatletter
\newenvironment{named}[1]
  {\def\namedthmname{#1}%
   \refstepcounter{namedthm}%
   \namedthm\def\@currentlabel{#1}}
  {\endnamedthm}
\makeatother

\begin{document}

\begin{abstract}
A polynomial with integer coefficients yields a family of dynamical systems indexed by primes as follows: for any prime $p$, reduce its coefficients mod $p$ and consider its action on the field $\F_p$.
The questions of whether and in what sense these families are random have been studied extensively, spurred in part by Pollard's famous ``rho'' algorithm for integer factorization (the heuristic justification of which is the randomness of one such family).
However, the cycle structure of these families \emph{cannot} be random, since in any such family, the number of cycles of a fixed length in any dynamical system in the family is bounded.
In this paper, we show that the cycle statistics of many of these families are \emph{as random as possible}.
As a corollary, we show that most members of these families have many cycles, addressing a conjecture of Mans et.\ al.
\end{abstract}

\maketitle

\section{Introduction}
\label{intro}

A \emph{\textup{(}discrete\textup{)} dynamical system} is a pair $\lp S,f\rp$ consisting of a set $S$ and a function $f\colon S\to S$.
For notational convenience, for any $n\in\Z_{>0}$, we let $f^n=\overbrace{f\circ\cdots\circ f}^{n\text{ times}}$; furthermore, we set $f^0=\id_S$.
We will denote the set of rational primes by $\mathcal{P}$.
For $f\in\Z[x]$ and $p\in\mathcal{P}$, write $\lc f\rc_p$ for the polynomial in $\F_p[x]$ obtained by reducing the coefficients of $f$ mod $p$.
Similarly, let $[f]_{\mathcal{P}}$ be the family of dynamical systems $\lb\lp\F_p,[f]_p\rp\mid p\in\mathcal{P}\rb$.
Spurred in part by Pollard's rho algorithm~\cite{Pollard}, the following question presents itself: for $f\in\Z[x]$, in what sense does the family $[f]_{\mathcal{P}}$ behave randomly?
Of course, the answer to this question depends upon
\begin{itemize}
\item
the choice of $f\in\Z[x]$ and
\item
what ``behave randomly'' means.
\end{itemize}
In this paper, we restrict our attention to
\begin{itemize}
\item
monic binomial unicritical polynomials (that is, polynomials of the form $x^k+a$ for $k\in\Z_{\geq2}$ and $a\in\Z$) and
\item
the cycle structure of dynamical systems.
\end{itemize}

Recall that if $\lp S,f\rp$ is a dynamical system, and $s\in S$ has the property that there is some $n\in\Z_{>0}$ with $f^n(s)=s$, we say that $s$ is \emph{periodic} or a \emph{periodic point} of $(S,f)$.
The smallest such $n$ is the \emph{period} of $s$, and if $s$ happens to be a point of period $n$, we call $\lb f^i(s)\mid i\in\Z_{\geq0}\rb$ an \emph{$n$-cycle}.
In~\cite{Gon} (see also the translation~\cite{GonTran}), Gon\v{c}arov discovered the cycle structure of random dynamical systems, which we now recall.
To ease notation, for any sets $S$ and $T$, let $S^T$ denote the set of functions from $T$ to $S$, and for any $X\in\Z_{\geq1}$, let $[X]=\lb1,\ldots,X\rb$.
Gon\v{c}arov proved that for any $n\in\Z_{\geq1}$, the function
\begin{align*}
\mu_n\colon\Z_{\geq0}&\to\R_{\geq0}\\
j&\mapsto\lim_{X\to\infty}
{\frac{\lv\lb f\in[X]^{[X]}\mid\lp[X],f\rp\text{ has precisely $j$ $n$-cycles}\rb\rv}
{\lv[X]^{[X]}\rv}}
\end{align*}
is the Poisson distribution with mean $\frac{1}{n}$.
As the Poisson distribution will play an important role in this paper, we pause here to introduce some notation: for any $\lambda\in\R_{>0}$, we let $\rho_\lambda\colon\Z_{\geq0}\to\R_{\geq0}$ be the Poisson distribution of mean $\lambda$.
With this notation in hand, we can rephrase Gon\v{c}arov's result: if $n\in\Z_{\geq1}$, then $\mu_n=\rho_{1/n}$.

Since Gon\v{c}arov was kind enough to compute the cycle statistics of random dynamical systems, we now turn to quantifing the cycle statistics of the families $[f]_{\mathcal{P}}$ for $f\in\Z[x]$; we do this by using the natural density on $\mathcal{P}$---which we will denote by $\delta$.
Specifically, for any subset $P\subseteq\mathcal{P}$, let
\[
\delta\lp P\rp:=\lim_{X\to\infty}
{\frac{\lv\lb p\in\mathcal{P}
\mid p\leq X\text{ and }p\in P\rb\rv}
{\lv\lb p\in\mathcal{P}\mid p\leq X\rb\rv}}
\hspace{20px}\text{(if this limit exists)}.
\]
Similarly, let
\[
\overline{\delta}\lp P\rp:=\limsup_{X\to\infty}
{\frac{\lv\lb p\in\mathcal{P}
\mid p\leq X\text{ and }p\in P\rb\rv}
{\lv\lb p\in\mathcal{P}\mid p\leq X\rb\rv}}.
\]
\begin{impossible}\label{impossible}
We must begin by remarking that it is \emph{a priori} impossible for the cycle statistics of $[f]_\mathcal{P}$ to match those of random dynamical systems.
As shown by Gon\v{c}arov, if $n\in\Z_{\geq1}$ then for any $j\in\Z_{\geq0}$, there is a positive proportion of dynamical systems with precisely $j$ $n$-cycles (that is, $\mu_n(j)>0$ for all $j\in\Z_{\geq0}$).
On the other hand, if $f\in\Z[x]$, $p\in\mathcal{P}$, and $\alpha\in\F_p$ is a point of period $n$ of $\lp\F_p,[f]_p\rp$, then $\alpha$ is a root of $\lc f^n(x)-x\rc_p$; thus, there are no more than $n^{-1}\cdot\deg{\lp f^n\rp}$ $n$-cycles in $\lp\F_p,[f]_p\rp$.
(In fact, something more is true: all points of period $n$ in $\lp\F_p,[f]_p\rp$ are roots of the $n$th dynatomic polynomial of $[f]_p$; we review the theory of dynatomic polynomials in \hyperref[dynatomic]{Section~\ref*{dynatomic}}.)
\end{impossible}
In \hyperref[bigtheorem]{Theorem~\ref*{bigtheorem}} we compute the cycle statistics of the family $\lc x^k+a\rc_\mathcal{P}$ for any $k\in\Z_{\geq2}$ and most $a\in\Z$.
In particular, we obtain the following corollary, which relates these statistics to those discovered by Gon\v{c}arov.

\begin{finalmomentscor}\label{finalmomentscor}
For any $k\in\Z_{\geq2}$ and $n\in\Z_{\geq1}$, there is a probability distribution ${}_k\omega_n\colon\Z_{\geq0}\to\R_{\geq0}$ and a thin set\footnote{
See \hyperref[HITremark]{Remark~\ref*{HITremark}} for a discussion of thin sets and their sizes.} $\mathcal{A}_{k,n}\subseteq\Z$ such that for all $a\in\Z\setminus\mathcal{A}_{k,n}$,
\begin{itemize}
\item
for all $j\in\Z_{\geq0}$,
\[
{}_k\omega_n(j)
=\delta\lp\lb p\in\mathcal{P}
\,\Big\vert\,\lp\F_p,\lc x^k+a\rc_p\rp\text{ has precisely $j$ $n$-cycles}\rb\rp
\]
and
\item
there is an explicit constant $r_k(n)\in\Z_{\geq1}$ such that for all $m\in\Z_{\geq0}$,
\begin{itemize}
\item
if $m\leq r_k(n)$, the $m$th moment of the distributions ${}_k\omega_n$ and $\mu_n$ are the same and
\item
if $m>r_k(n)$, the $m$th moment of ${}_k\omega_n$ is less than the $m$th moment of $\mu_n$.
\end{itemize}
\end{itemize}
\end{finalmomentscor}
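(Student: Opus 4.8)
The plan is to deduce this corollary from Theorem~\ref{bigtheorem} (which pins down the Galois theory of the periodic points of $x^k+a$), the Chebotarev density theorem, and the wreath-product combinatorics of Proposition~\ref{wreathprobdist}; granting those inputs, what is left is a translation followed by a moment computation. Fix $k\ge2$ and $n\ge1$ and recall the dynatomic set-up of Section~\ref{dynatomic}. Let $\Phi$ be the $n$th dynatomic polynomial of $x^k+a$; outside a thin set of $a$ it has $\deg\Phi$ distinct roots, all of exact period $n$, and $x\mapsto x^k+a$ partitions them into $N:=\tfrac1n\deg\Phi$ orbits of size $n$ (the usual M\"obius-type count of the length-$n$ cycles of a degree-$k$ map). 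Choosing an ordering of these orbits realizes $\Gal(\Phi/\Q)$ as a subgroup of the wreath product $W:=\Z/n\Z\wr S_N$, the $N$ cyclic factors recording the rotations that Galois elements induce within the orbits they fix. By Theorem~\ref{bigtheorem} there is a thin set $\mathcal{A}_{k,n}\subseteq\Z$---enlarged, if necessary, by the finitely many $a$ at which $\Phi$ degenerates---with $\Gal(\Phi/\Q)=W$ for every $a\in\Z\setminus\mathcal{A}_{k,n}$. For such $a$ and all but finitely many primes $p$ (discard ramified primes and primes dividing a suitable discriminant or resultant, so that $\lc\Phi\rc_p$ still has $\deg\Phi$ distinct roots, all of exact period $n$), the number of $n$-cycles of $\lp\F_p,\lc x^k+a\rc_p\rp$ equals $X(\sigma_p)$, where $\sigma_p\in W$ is the image of the Frobenius at $p$ and $X\colon W\to\Z_{\ge0}$ is the class function (as one checks directly)
\[
X\lp(c_1,\dots,c_N),\pi\rp:=\lv\lb i\in[N]\mid\pi(i)=i\ \text{and}\ c_i=0\rb\rv
\]
(an orbit of roots descends to a length-$n$ cycle over $\F_p$ exactly when $\sigma_p$ fixes it and acts trivially on it). Letting ${}_k\omega_n$ be the pushforward of the uniform probability measure on $W$ along $X$---a probability distribution on $\Z_{\ge0}$---the Chebotarev density theorem yields, for $a\notin\mathcal{A}_{k,n}$ and every $j\in\Z_{\ge0}$,
\[
{}_k\omega_n(j)=\delta\lp\lb p\in\mathcal{P}\mid\lp\F_p,\lc x^k+a\rc_p\rp\text{ has precisely $j$ $n$-cycles}\rb\rp,
\]
which is the first bullet.

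For the second bullet I would compare \emph{factorial} moments. The $r$th factorial moment $\E[X(X-1)\cdots(X-r+1)]$ of $\rho_\lambda$ is $\lambda^r$, and the Stirling identity $x^m=\sum_r S(m,r)\,x(x-1)\cdots(x-r+1)$ (Lemma~\ref{stirlinglemma}; the $S(m,r)$ are nonnegative and $S(m,m)=1$) writes the ordinary moments of any distribution as a triangular, nonnegative transform of its factorial moments. Proposition~\ref{wreathprobdist} describes the distribution of $X$ on $W$; in particular its $r$th factorial moment equals $n^{-r}$ for $1\le r\le N$ and equals $0$ for $r>N$. (The case $r>N$ holds since $X\le N$ pointwise; for $r\le N$ one notes that $X(X-1)\cdots(X-r+1)$ counts ordered $r$-tuples of distinct ``good'' indices and that, for fixed distinct $i_1,\dots,i_r$, the events ``$\pi$ fixes each $i_j$'' and ``each $c_{i_j}=0$'' are independent, of probabilities $(N-r)!/N!$ and $n^{-r}$, so the expectation is $\tfrac{N!}{(N-r)!}\cdot\tfrac{(N-r)!}{N!}\cdot n^{-r}=n^{-r}$.) Thus ${}_k\omega_n$ and $\mu_n=\rho_{1/n}$ have the same factorial moments in every order $\le N$, while those of ${}_k\omega_n$ are strictly smaller in every order $>N$. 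Transforming back: if $m\le N$, every Stirling term contributing to the $m$th moment has $r\le m\le N$, so the $m$th moments of ${}_k\omega_n$ and $\mu_n$ coincide; if $m>N$, the $m$th moment of $\mu_n$ minus that of ${}_k\omega_n$ equals
\[
\sum_{r=N+1}^{m}S(m,r)\,n^{-r},
\]
which is positive because $S(m,m)=1$. This is the second bullet, with $r_k(n):=N$, an explicit positive integer.

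The one genuinely hard ingredient is Theorem~\ref{bigtheorem}---that the dynatomic Galois group is the \emph{full} wreath product $W$ for $a$ outside a thin set---which I take as given; it cannot be weakened, since were $\Gal(\Phi/\Q)$ a proper subgroup of $W$ then (the identity of $W$ being ``good'' at every index) the $N$th factorial moment of ${}_k\omega_n$ would strictly exceed $n^{-N}$, forcing the $N$th moment of ${}_k\omega_n$ to exceed that of $\mu_n$, contrary to the corollary. Granting Theorem~\ref{bigtheorem}, the rest is routine: a discriminant-and-resultant argument, via Section~\ref{dynatomic}, confirming that the $n$-cycle count modulo $p$ really is the class function $X(\sigma_p)$ for all but finitely many $p$; and the bookkeeping above, where the only subtlety worth flagging is that the strict inequality first appearing at $m=N+1$ propagates to all larger $m$ precisely because the Stirling coefficients are nonnegative.
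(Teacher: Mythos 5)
Your argument is correct, and its first half follows the paper's skeleton: the paper also gets the first bullet from the full-wreath-product structure of the dynatomic Galois group specialized outside a thin set (in the paper this is \ref{mortonpower} plus the \hyperref[HIT]{Hilbert Irreducibility Theorem}, packaged into \hyperref[bigtheorem]{Theorem~\ref*{bigtheorem}}, whose conclusion is already the density identity, with \hyperref[rootsandcycles]{Proposition~\ref*{rootsandcycles}} handling the finitely many bad primes and the \hyperref[FDT]{Frobenius Density Theorem} playing the role you give to Chebotarev --- an immaterial difference, since only fixed-point counts of Frobenius are needed). Note two labeling slips: \hyperref[bigtheorem]{Theorem~\ref*{bigtheorem}} is the density statement itself, not the Galois-group statement you attribute to it, and the ``Proposition wreathprobdist'' you cite does not occur in the text (though you prove inline the fact you want from it). Where you genuinely diverge is the moment comparison: the paper computes all ordinary moments of $\mathbf{W}_{n,r}$ by an induction using character theory and Frobenius reciprocity (\hyperref[momenttheorem]{Theorem~\ref*{momenttheorem}}, via \hyperref[stirlinglemma]{Lemma~\ref*{stirlinglemma}}), obtaining $\M_m(\omega_{n,r})=\sum_{i=0}^{\min(m,r)}\lb m\atop i\rb n^{-i}$ and comparing with \hyperref[pmoments]{Fact~\ref*{pmoments}}; you instead compute the factorial moments of the fixed-block count directly by linearity of expectation over ordered tuples of blocks (getting $n^{-r}$ for $r\le r_k(n)$ and $0$ beyond) and convert with the Stirling expansion $x^m=\sum_r\lb m\atop r\rb x(x-1)\cdots(x-r+1)$. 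This is a more elementary route to exactly the same moment formula, and it delivers the strict inequality for $m>r_k(n)$ just as the paper's formula does (the missing positive terms $\sum_{r>r_k(n)}\lb m\atop r\rb n^{-r}$); your closing observation that a proper subgroup would inflate the top factorial moment is a correct, if tangential, bonus.
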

\noindent For any $a\in\Z$, the integer $nr_k(n)$ is the degree of the $n$th dynatomic polynomial of $f$, so $r_k(n)$ is the maximum possible number of cycles of length $n$ in $\lp\F_p,\left[x^k+a\right]_p\rp$.
See \hyperref[rkn]{Definition~\ref*{rkn}} for more details.
In particular, we have the inequality $n^{-1}k^{n-1}<r_k(n)<n^{-1}k^n$.

\begin{notsoimpossible}\label{notsoimpossible}
As pointed out in \hyperref[impossible]{Remark~\ref*{impossible}}, for any $k\in\Z_{\geq2}$ and $n\in\Z_{\geq1}$, we know that ${}_k\omega_n(j)=0$ for all $j>r_k(n)$.
Since the matrix $\lp i^j\rp_{1\leq i,j\leq r_k(n)}$ is invertible, we know that there is at most one distribution $\omega\colon\Z_{\geq0}\to\R$ with the property that for all $m\in\lb0,1,\ldots,r_k(n)\rb$, the $m$th moments of $\omega$ and $\mu_n$ are the same.
In other words, \hyperref[finalmomentscor]{Corollary~\ref*{finalmomentscor}} is the statement that for most $a\in\Z$, the cycle distribution of $\lc x^k+a\rc_\mathcal{P}$ is \emph{as random as possible}.
\end{notsoimpossible}

The organization of this paper is as follows: we begin with some background in \hyperref[background]{Section~\ref*{background}}; in particular, we recall Pollard's algorithm and introduce the theory of dynatomic polynomials. 
Afterwards, in in \hyperref[machine]{Section~\ref*{machine}}, we define and study a family of distributions $\lb\omega_{n,r}\rb_{n,r\in\Z_{\geq1}}$ containing the distributions mentioned in \hyperref[finalmomentscor]{Corollary~\ref*{finalmomentscor}} as a subfamily.
In \hyperref[momenttheorem]{Theorem~\ref*{momenttheorem}}, we compute all their moments.
Then, in \hyperref[fixedpointprob]{Theorem~\ref*{fixedpointprob}}, we prove precise estimates for them; this theorem generalizes Theorem~3.5 of~\cite{BG}, which estimates only $\omega_{n,r}(0)$.
In \hyperref[cyclestructuremoments]{Section~\ref*{cyclestructuremoments}} we prove the aforementioned \hyperref[bigtheorem]{Theorem~\ref*{bigtheorem}}, obtaining \hyperref[finalmomentscor]{Corollary~\ref*{finalmomentscor}} as an immediate consequence.
Next, in \hyperref[puttingitalltogether]{Section~\ref*{puttingitalltogether}} we apply our results to prove a version of Conjecture~2.2 of~\cite{MSSS}, which concerns the statistics of $\bigcup_{a\in\Z}{\lc x^2+a\rc_\mathcal{P}}$.
This conjecture asserts in particular that
\[
\lv\lb a\in[p]
\,\Big\vert\,\lp\F_p,\lc x^2+a\rc_p\rp\text{ has precisely one cycle}\rb\rv=O\lp\sqrt{p}\rp.
\]
Our result is \hyperref[thm:final]{Theorem~\ref*{thm:final}}, which we prove in \hyperref[puttingitalltogether]{Section~\ref*{puttingitalltogether}}.

\newtheorem*{thm:final}{\hyperref[thm:final]{Theorem~\ref*{thm:final}}}
\begin{thm:final}
If $k\in\Z_{\geq2}$ and $J\in\Z_{\geq1}$, then for any $\epsilon\in\R_{>0}$, there exists a thin set $\mathcal{A}_{k,J,\epsilon}\subseteq\Z$ such that for all $a\in\Z\setminus\mathcal{A}_{k,J,\epsilon}$,
\[
\overline{\delta}\lp\lb p\in\mathcal{P}
\mid\lp\F_p,\lc x^k+a\rc_p\rp\text{ has }J\text{ or fewer cycles}\rb\rp
<\epsilon.
\]
\end{thm:final}
\noindent Note that \hyperref[thm:final]{Theorem~\ref*{thm:final}} addresses the presence of arbitrarily many cycles, for fixed unicritical polynomials of arbitrarily large degree.

Additionally, in \hyperref[cyclestructure]{Section~\ref*{cyclestructure}} we introduce a family of cycle densities on the set of \emph{all} monic binomial unicritical polynomials in $\Z[x]$.
In \hyperref[cycledistribution]{Theorem~\ref*{cycledistribution}} and \hyperref[disconnectedbox]{Corollary~\ref*{disconnectedbox}}, we prove in particular that
\begin{itemize}
\item
for all $k\in\Z_{\geq2}$ and $n\in\Z_{\geq1}$
\[
\lim_{X\to\infty}
{\frac{1}{2X+1}
\sum_{a=-X}^X
{\delta\lp\lb p\in\mathcal{P}\,\Big\vert\,
\lp\F_p,\lc x^k+a\rc_p\rp\text{ has precisely $j$ $n$-cycles}\rb\rp}}
={}_k\omega_n(j),
\]
and
\item
for all $k\in\Z_{\geq2}$, $J\in\Z_{\geq1}$, and $\epsilon\in\R_{>0}$, there exists $N\in\Z_{\geq1}$ such that for all $n\in\Z_{\geq N}$,
\[
\lim_{X\to\infty}
{\frac{1}{2X+1}
{\sum_{a=-X}^X
{\delta\lp\lb p\in\mathcal{P}\,\Big\vert\,
\substack{\lp\F_p,\lc x^k+a\rc_p\rp\text{ has $J$ or fewer cycles}\\\text{of length at most $n$}}\rb\rp}}}<\epsilon.
\]
\end{itemize}
The latter fact implies that there is an increasing function $\gamma\colon\Z_{\geq1}\to\Z_{\geq1}$ such that
\[
\lim_{J\to\infty}
{\frac{1}{2\gamma(J)+1}
{\sum_{a=-\gamma(J)}^{\gamma(J)}
{\delta\lp\lb p\in\mathcal{P}\,\Big\vert\,
\substack{\lp\F_p,\lc x^k+a\rc_p\rp\text{ has $J$ or fewer cycles}\\\text{of length at most $\gamma(J)$}}\rb\rp}}}=0.
\]
We conclude the paper with a conjecture on the growth rate of $\gamma$.


\section{Background}
\label{background}

\subsection{Pollard's rho algorithm and randomness}
\label{pollardrho}

In~\cite{Pollard}, Pollard presented his famous ``rho algorithm" for integer factorization, which was the first algorithm to factor the Fermat number $2^{256}+1$, see~\cite{BP}.
A conjectural termination time of this algorithm relies upon an aspect of the supposed ``randomness'' of $\lc x^2+1\rc_\mathcal{P}$.
To better state this conjecture, we introduce a bit more notation.
For any dynamical system $(S,f)$ and $s\in S$, we let
\[
\mathbf{O}_{(S,f)}(s)=\lv\lb f^n(s)\mid n\in\Z_{\geq0}\rb\rv.
\]
(If $\lb f^n(s)\mid n\in\Z_{\geq0}\rb$ is infinite, we write $\mathbf{O}_{(S,f)}(s)=\infty$.)
Now, letting $g=x^2+1$, then the conjecture that
\[
\mathbf{O}_{\lp\F_p,[g]_p\rp}(0)
=O\lp\sqrt{p}\rp
\]
implies that there is some $C\in\R_{>0}$ such that Pollard's algorithm, applied to any positive composite integer $m$, terminates in at most
\[
C\sqrt{\min{\lp \lb p\in\mathcal{P}
\mid p\text{ divides }m\rb\rp}}
\]
steps.
This conjecture formalizes the hope that for all $p\in\mathcal{P}$, the ``time to first repeat'' of $[g]_p$ (acting on $0\in\F_p$) is the same as that of a random self-map on a set of size $p$, which is well-known and classical; indeed,
\[
\frac{1}{\lv[X]^{[X]}\rv}\sum_{f\in[X]^{[X]}}
{\mathbf{O}_{\lp[X],f\rp}(0)}
=O\lp\sqrt{X}\rp.
\]
This conjecture provides motivation for the question mentioned in \hyperref[intro]{Section~\ref*{intro}}: what properties do families of polynomials share with random maps?
See \cite{BGHKST,SilvermanVariationmodp} 
for further analysis of the ``random map" 
model in arithmetic dynamics and its limitations, and~\cite{FlajoletOdlyzko} for an extensive presentation of the statistics of random mappings (also~\cite{Harris,Stepanov,ArneyBender}).

There has been quite a bit of recent work on this question, both for the families
\[
[f]_{\mathcal{P}},
\hspace{10px}\text{for $f\in\Z[x]$},
\]
and the families
\[
\bigcup_{\substack{f\in\Z[x]\\\deg{f}=k}}{[f]_{\mathcal{P}}},
\hspace{10px}\text{for $k\in\Z_{\geq2}$.}
\]
For the latter case, see \cite{FG}, \cite{BS}, \cite{BGTW}, and in particular~\cite{Bach},
who proved that for any $N\in\Z_{\geq1}$, there is a constant $C_N\in\R_{>0}$ with the property that for all $p\in\mathcal{P}$,
\[
\frac{1}{p^2}\sum_{\alpha\in\F_p}
{\lv\lb\beta\in\F_p\mid
\mathbf{O}_{\lp\F_p,x^2+\alpha\rp}(\beta)\leq N\rb\rv}
>\frac{1}{p}\binom{N}{2}+C_Np^{-\frac{3}{2}}.
\]
In the former case, Juul, Kurlberg, Madhu, and Tucker prove in~\cite{JKMT} that if $f=x^k+a$, with $k\in\Z_{\geq2}$ and $a\in\Z$, and $f$ is not conjugate to the Chebyshev polynomial $x^2-2$, then
\[
\liminf_{p\to\infty}{\lp\frac{1}{p}\cdot\lv\lb\alpha\in\F_p\mid \alpha\text{ periodic in }\lp\F_p,[f]_p\rp\rb\rv\rp}=0.
\]
In~\cite{Heath-Brown}, Heath-Brown computed explicit bounds on periodic points of a family of dynamical systems; indeed, for any $a,c\in\Z_{\geq1}$, he showed that
\[
\lv\lb\alpha\in\F_p\mid \alpha\text{ periodic in }\lp\F_p,[ax^2+c]_p\rp\rb\rv=O_{a,c}\lp\frac{p}{\log{\log{p}}}\rp.
\]

Both Bach and Heath-Brown obtain their results by using classical point-counting techniques (such as Weil's ``Riemann Hypothesis" and Bezout's Theorem) to study the curves $\lp[f]_p\rp^n(x)=\lp[f]_p\rp^n(y)$, for $f\in\Z[x]$ and $n\in\Z_{\geq1}$.
On the other hand, Juul et.\ al.\ use the ``arboreal Galois theory" pioneered by Odoni~\cite{Odoni}, and elaborated upon by Boston, Jones, and many others~\cite{BostonJones}.
This theory analyzes Galois groups of polynomials of the form $f^n(x)-a$ for $f\in\Q[x]$ and $a\in\Q$.

To analyze the cycle structure of periodic points of dynamical system, we use the Galois theory of dynatomic polynomials, which we now recall.

\subsection{Dynatomic polynomials}
\label{dynatomic}

As we intend to study the cycle structure of dynamical systems induced by polynomials, we will make use of the theory of dynatomic polynomials (and their Galois groups).
See \cite{MP}, \cite{MorCurves} (and the correction in \cite{MorCorrection}), \cite{MorGalGroups}, and ~\cite[Chapter 4.1]{SilvermanADS} for background in this area.
We sketch an introduction, focusing on the aspects of the theory we will use in our results.

Let $K$ be a field, $f\in K[x]$, and $n\in\Z_{>0}$.
The points of period $n$ of the dynamical system $\lp K,f\rp$ are certainly roots of the polynomial $f^n(x)-x$.
However, if $d\in\Z_{\geq1}$ and $d\mid n$, then this polynomial vanishes on points of period $d$ as well (for example, if $\alpha\in K$ is a fixed point of $(K,f)$, i.e. $f(\alpha)=\alpha$, then $f^n(\alpha)=\alpha$ for all $n\in\Z_{\geq1}$).
In an attempt to sieve out the points of lower period, one defines the $n$th \emph{dynatomic polynomial of $f$} for any $n\in\Z_{\geq1}$:
\[
\Phi_{f,n}(x)
:=\prod_{d\mid n}
{\lp f^d(x)-x\rp^{\mu(n/d)}},
\]
where $\mu\colon\Z_{\geq0}\to\lb-1,0,1\rb$ is the usual M\"obius function.
The fact that 
\[
\prod_{d\mid n} \Phi_{f,n}(x) = f^n(x)-x
\]
follows quickly by applying the M\"obius inversion formula.
As usual, we omit ``$K$'' from the notation ``$\Phi_{f,n}$''; we will always specify the set of coefficients of $f$, so that the field $K$ will be clear from context.
As indicated by its name, the $n$th dynatomic polynomial is analogous to the $n$th cyclotomic polynomial, which vanishes precisely on primitive $n$th roots of unity.
(It turns out that $\Phi_{f,n}$ may occasionally vanish on points of period $d$ for $d<n$: see~\cite[Example 4.2]{SilvermanADS}.
Luckily, \hyperref[rootsandcycles]{Proposition~\ref*{rootsandcycles}} addresses this inconvenience.)
We should mention that it is not \emph{a priori} obvious that $\Phi_{f,n}$ is a polynomial.
See~\cite[Theorem~2.5]{MP} for a proof that $\Phi_{f,n}\in K[x]$.
(In particular, if $f\in\Z[x]$ and $f$ is monic, then $\Phi_{f,n}\in\Z[x]$ by Gauss's Lemma.)
The degrees of certain dynatomic polynomials will be important quantities in many computations that follow, so we introduce the following notation.
\begin{rkn}\label{rkn}
For any $n\in\Z_{\geq1}$ and $k\in\Z_{\geq2}$, define $r_k(n)$ to be the positive integer such that $n\cdot r_k(n)$ is the degree (in $x$) of the $n$th dynatomic polynomial of $x^k+c\in\Q(c)[x]$;
that is,
\[
r_k(n)=\frac{1}{n}\cdot
\sum_{d\mid n}
{k^d\mu\lp\frac{n}{d}\rp}.
\]
\end{rkn}
\begin{rknfax}\label{rknfax}
For any $n\in\Z_{\geq1}$ and $k\in\Z_{\geq2}$, the quantity $r_k(n)$ is quite large compared to $n$; indeed, for all such $n$ and $k$,
\[
\frac{k^{n-1}}{n}<r_k(n)<\frac{k^n}{n}.
\]
\end{rknfax}

Our proofs of \hyperref[thm:final]{Theorem~\ref*{thm:final}} and \hyperref[bigtheorem]{Theorem~\ref*{bigtheorem}} rely in part on the knowledge of the structure of the Galois groups of $\Phi_{f,n}$, where $n\in\Z_{\geq1}$ and $f(x)=x^k+a\in\Z[x]$ for $k\in\Z_{\geq2}$ and $a\in\Z$.
For a specific polynomial $f\in\Z[x]$ of this form and any large $n$, it is difficult to compute the Galois group of $\Phi_{f,n}$, since the degree of $\Phi_{f,n}$ is so large, but---thanks to work of Morton~\cite{MorGalGroups}---the Galois groups of $\Phi_{f,n}$ for $f(x)=x^k+c\in\Q(c)[x]$ are known.
To state \ref{mortonpower}, we introduce a bit more notation.
If $f=f(x)=x^k+c\in\Q(c)[x]$ and $n\in\Z_{\geq1}$, let
\begin{itemize}
\item $\Sigma_{f,n}$ denote the splitting field of $\Phi_{f,n}$ over $\Q(c)$, and
\item $K_{f,n}$ denote the splitting field of $f^n(x)-x$ over $\Q(c)$.
\end{itemize}
Note that $K_{f,n}$ is the compositum of the fields in $\lb\Sigma_{f,d}\mid d\in\Z_{\geq1}\text{ and }d\mid n\rb$. 

We now have most of the notation to state \ref{mortonpower}.
We postpone the discussion of wreath products (which appear in \ref{mortonpower} as $C_d\wr S_{r_k(d)}$) and their natural action (on the sets $B\lp n,r_k(n)\rp$) until \hyperref[machine]{Section~\ref*{machine}}, which immediately follows the statement of the theorem; there, we will introduce wreath products and their actions, then discuss them in detail.
The following theorem combines results from~\cite{MorGalGroups}.
\begin{named}{Morton's Theorem}\label{mortonpower}
Let $k\in\Z_{\geq2}$, $n\in\Z_{\geq1}$, and $f=f(x)=x^k+c\in\Q(c)[x]$.
Next, set $\mathcal{S}
=\lb\Sigma_{f,d}
\mid d\in\Z_{\geq1}\text{ and }d\mid n\rb$.
Then $f^n(x)-x$ has no repeated roots, and if $\Sigma_{f,d}\in\mathcal{S}$, then
\begin{itemize}
\item
the field $\Sigma_{f,d}$ is linearly disjoint from the compositum of the fields in $\mathcal{S}\setminus\lb \Sigma_{f,d}\rb$,
\item
the roots of $\Phi_{f,d}$ are precisely the points of period $d$ in the dynamical system $\lp\Sigma_{f,d},f\rp$,
\item
$\Gal{\lp \Sigma_{f,d}/\Q(c)\rp}\simeq C_d\wr S_{r_k(d)}$, and
\item
the action of $\Gal{\lp \Sigma_{f,d}/\Q(c)\rp}$ on the points of period $d$ in the dynamical system $\lp\Sigma_{f,d},f\rp$ matches the action of $C_d\wr S_{r_k(d)}$ on $B(d,r_k(d))$.
\end{itemize}
\end{named}

\section{Fixed points in wreath product actions}
\label{machine}

In this section, we study the statistics of fixed-point proportions in a family of wreath products. 
This family includes the groups that appear as Galois groups of dynatomic polynomials, so 
these statistics are a vital component of our proofs of \hyperref[bigtheorem]{Theorem~\ref*{bigtheorem}} and \hyperref[thm:final]{Theorem~\ref*{thm:final}}.
We begin with some definitions.

Suppose that $r\in\Z_{\geq1}$.
For any group $G$, we write $G\wr S_r$ to mean the wreath product $G\wr_{\{1,\dots,r\}} S_r$.
That is, $G\wr S_r=G^r\rtimes S_r$, where $S_r$ acts on $G^r$ by permuting coordinates.
In particular, we note that $\lv G\wr S_r\rv=r!|G|^r$. See~\cite[Chapter 3A]{IsaacsGroupTheory} for background on wreath products.
For any $n\in\Z_{\geq1}$, we let $C_n$ denote the cyclic group of size $n$; we then write $B(n,r)$ for the set $C_n\times\lb1,\ldots,r\rb$.
The group $C_n\wr S_r$ acts naturally on $B(n,r)$; concretely, for any $\sigma=\lp\lp\zeta_1,\ldots,\zeta_r\rp,\pi\rp\in C_n\wr S_r$, this action is given by
\begin{align*}
\sigma\colon B(n,r)&\to B(n,r)\\
\lp\zeta,i\rp&\mapsto\lp\zeta_i\cdot\zeta,\pi(i)\rp.
\end{align*}
For any $\sigma\in C_n\wr S_r$, define
\[
\Fix{\sigma}=\lb\lp\zeta,i\rp\in B(n,r)
\,\,\big\vert\,\,\sigma\lp\zeta,i\rp=\lp\zeta,i\rp\rb.
\]
Observe that $n$ divides $\lv\Fix{\sigma}\rv$---this follows from the fact that if $\sigma$ fixes any $\lp\zeta_0,i\rp\in B(n,r)$, then it must fix each $\lp\zeta,i\rp$ for all $\zeta\in C_n$.
With this fact in mind, we now define the random variable $\mathbf{W}_{n,r}$ on $C_n\wr S_{r}$ by
\begin{align*}
\mathbf{W}_{n,r}\colon C_n\wr S_r
&\to\Z_{\geq0}\\
\sigma
&\mapsto\frac{1}{n}\cdot
\lv\Fix{\sigma}\rv.
\end{align*}
Note that the random variable $n\cdot\mathbf{W}_{n,r}$ is the permutation character of the action of $C_n\wr S_r$ on $B(n,r)$.
We will denote the probability distribution associated to $\mathbf{W}_{n,r}$ by $\omega_{n,r}$; concretely,
\begin{align*}
\omega_{n,r}\colon\Z_{\geq0}
&\to\R_{\geq0}\\
j&\mapsto\frac{\lv\lb\sigma\in C_n\wr S_r
\,\big\vert\,\lv\Fix{\sigma}\rv=jn\rb\rv}{\lv C_n\wr S_r\rv}.
\end{align*}
\hyperref[moments]{Section~\ref*{moments}} is devoted to computing the moments of $\mathbf{W}_{n,r}$.
Then, \hyperref[probbounds]{Section~\ref*{probbounds}} computes useful bounds on $\omega_{n,r}$.
These bounds generalize Theorem~3.5 of~\cite{BG}; in \hyperref[puttingitalltogether]{Section~\ref*{puttingitalltogether}} we apply these bounds to prove \hyperref[thm:final]{Theorem~\ref*{thm:final}}.


\subsection{Moments of permutation characters}
\label{moments}

In this section, we compute the moments of the distributions $\omega_{n,r}$.
Recall that if $\omega$ is a discrete probability distribution on $\Z_{\geq0}$, the \emph{$m$th moment} of $\omega$ is
\[
\sum_{j=0}^\infty{j^m\omega(j)};
\]
we denote the $m$th moment of a probability distribution $\omega$ by $\M_m{(\omega)}$.
If $\mathbf{X}$ is a random variable with codomain $\Z_{\geq0}$, we will write $\M_m{(\mathbf{X})}$ for the moment of the probability distribution associated to $\mathbf{X}$.
The moments of the Poisson distribution are well known; they are expressed in terms of Stirling numbers of the second kind.
For any $m,i\in\Z_{\geq0}$, the \emph{Stirling number of the second kind} is the number of ways of partitioning a set of $m$ elements into $i$ distinct nonempty subsets; we denote this number by $\lb m\atop i\rb$.
By convention we take
\[
\lb m\atop 0\rb=
\begin{cases}
1&\text{ if }m=0,\\
0&\text{ otherwise.}
\end{cases}
\]
\begin{pmoments}\label{pmoments}
For all $\lambda\in\R_{>0}$ and $m\in\Z_{\geq0}$,
\[
\M_m{\lp\rho_\lambda\rp}
=\sum_{i=0}^m\left\{m\atop i\right\}\lambda^i.
\]
\end{pmoments}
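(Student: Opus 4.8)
The plan is to reduce the computation to that of \emph{factorial moments}, which are especially simple for the Poisson distribution. First I would recall the classical polynomial identity
\[
x^m=\sum_{i=0}^m\lb m\atop i\rb x^{\underline{i}},
\qquad\text{where }\ x^{\underline{i}}:=x(x-1)\cdots(x-i+1)
\]
is the $i$th falling factorial; this is exactly the change of basis from falling factorials to monomials. It can be proved by counting the functions from an $m$-element set to a set of size $x\in\Z_{\geq0}$, sorted by the cardinality of their image, or by induction on $m$ using the recurrence $\lb m+1\atop i\rb=i\lb m\atop i\rb+\lb m\atop i-1\rb$; since it holds for every $x\in\Z_{\geq0}$ and both sides are polynomials, it is an identity of polynomials.

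Next I would compute the $i$th factorial moment of $\rho_\lambda$. Writing $\rho_\lambda(j)=e^{-\lambda}\lambda^j/j!$ and noting $j^{\underline{i}}=0$ for integers $0\le j<i$, a direct reindexing gives
\[
\sum_{j=0}^\infty j^{\underline{i}}\,\rho_\lambda(j)
=e^{-\lambda}\sum_{j=i}^\infty\frac{\lambda^j}{(j-i)!}
=\lambda^i e^{-\lambda}\sum_{\ell=0}^\infty\frac{\lambda^\ell}{\ell!}
=\lambda^i.
\]
Applying the polynomial identity above with $x=j$, summing against $\rho_\lambda$, and interchanging the finite sum over $i$ with the sum over $j$ (legitimate since all summands are nonnegative) yields
\[
\M_m{\lp\rho_\lambda\rp}=\sum_{j=0}^\infty j^m\rho_\lambda(j)
=\sum_{i=0}^m\lb m\atop i\rb\sum_{j=0}^\infty j^{\underline{i}}\,\rho_\lambda(j)
=\sum_{i=0}^m\lb m\atop i\rb\lambda^i,
\]
which is the claimed formula.

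There is no real obstacle here; the only points deserving a word of justification are the polynomial identity (via the combinatorial count or the Stirling recurrence) and the nonnegativity that licenses the interchange of summations. As an alternative one could argue via generating functions: the exponential generating function $\sum_{m\ge0}\lb m\atop i\rb t^m/m!=(e^t-1)^i/i!$ gives $\sum_{m\ge0}\bigl(\sum_{i=0}^m\lb m\atop i\rb\lambda^i\bigr)t^m/m!=e^{\lambda(e^t-1)}$, which is precisely the moment generating function of $\rho_\lambda$, and comparing coefficients of $t^m/m!$ finishes the proof.
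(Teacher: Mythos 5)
Your proof is correct. Note that the paper does not prove this statement at all: it is recorded as a Fact and attributed to the classical literature (``the moments of the Poisson distribution are well known''), so there is no argument in the paper to compare yours against. Your derivation---expanding $j^m$ in the falling-factorial basis via the identity $j^m=\sum_{i=0}^m\left\{m\atop i\right\}j(j-1)\cdots(j-i+1)$ and using that the $i$th factorial moment of $\rho_\lambda$ equals $\lambda^i$---is the standard proof of this identity, and every step is justified: the change-of-basis identity holds as a polynomial identity, the reindexing computing the factorial moments is valid, and the interchange of the finite sum over $i$ with the series over $j$ is harmless. The moment-generating-function argument you sketch at the end is an equally valid alternative route.
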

We now turn to computing the moments of $n\cdot\mathbf{W}_{n,r}$ for any $n,r\in\Z_{\geq0}$; before proceeding, we prove a combinatorial identity involving Stirling numbers.

%


\begin{stirlinglemma}\label{stirlinglemma}
If $m\in\Z_{\geq1}$ and $i\in\Z_{\geq0}$, then
\[
\sum_{\ell=i}^{m-1}
{\binom{m-1}{\ell}\lb\ell\atop i\rb}
=\lb m\atop i+1\rb.
\]
\end{stirlinglemma}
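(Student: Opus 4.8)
The plan is to prove the identity
\[
\sum_{\ell=i}^{m-1}\binom{m-1}{\ell}\lb\ell\atop i\rb=\lb m\atop i+1\rb
\]
by a direct combinatorial argument, partitioning the set of set-partitions counted on the right-hand side according to which elements are grouped with a distinguished element. Fix the $m$-element set $[m]=\{1,\dots,m\}$, and single out the element $m$. The right-hand side counts partitions of $[m]$ into $i+1$ nonempty blocks. In any such partition, let $B$ be the block containing $m$, and let $T=B\setminus\{m\}\subseteq[m-1]$; say $|[m-1]\setminus T|=\ell$, so $\ell$ ranges over $i$ (when the remaining $i$ blocks are singletons forced by having exactly $\ell$ elements among which to form $i$ nonempty blocks) up to $m-1$ (when $B=\{m\}$). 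Given $T$, the remaining $i$ blocks form a partition of the $\ell$-element set $[m-1]\setminus T$ into $i$ nonempty subsets; there are $\lb\ell\atop i\rb$ such partitions, and $\binom{m-1}{\ell}$ choices for which $\ell$-element subset of $[m-1]$ is $[m-1]\setminus T$. Summing over $\ell$ from $i$ to $m-1$ gives exactly the left-hand side, establishing the identity.

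Alternatively, one can give an algebraic proof via the standard recurrence and the exponential generating function $\sum_m \lb m\atop i\rb \frac{x^m}{m!}=\frac{(e^x-1)^i}{i!}$, but the combinatorial argument above is cleanest. A third route is induction on $m$ using the Pascal-type recurrence $\binom{m-1}{\ell}=\binom{m-2}{\ell-1}+\binom{m-2}{\ell}$ together with the Stirling recurrence $\lb m\atop i+1\rb=(i+1)\lb m-1\atop i+1\rb+\lb m-1\atop i\rb$; this reduces the claim for $m$ to the claim for $m-1$ after regrouping the double sum, but bookkeeping the index shifts makes it messier than the bijective proof.

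The only real subtlety is verifying the range of summation and the degenerate cases: when $i=0$, the left-hand side is $\sum_{\ell=0}^{m-1}\binom{m-1}{\ell}\lb\ell\atop 0\rb=\lb 0\atop 0\rb=1$ (using the convention $\lb\ell\atop0\rb=[\ell=0]$), which matches $\lb m\atop 1\rb=1$ for $m\geq1$; and when $i=m-1$ only the term $\ell=m-1$ survives, giving $\lb m-1\atop m-1\rb=1=\lb m\atop m\rb$. These boundary checks confirm the convention for Stirling numbers of the second kind stated before the lemma is the right one to use, and no step presents a genuine obstacle.
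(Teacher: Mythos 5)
Your proof is correct and is essentially the same as the paper's: both arguments count partitions of $\{1,\dots,m\}$ into $i+1$ nonempty blocks according to the block containing the distinguished element $m$, choosing the $\ell$-element complement in $\{1,\dots,m-1\}$ in $\binom{m-1}{\ell}$ ways and partitioning it into $i$ nonempty blocks in $\lb\ell\atop i\rb$ ways. Your boundary-case checks are a harmless addition; nothing further is needed.
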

\begin{proof}
For any $\ell\in\{1,\dots,m-1\}$, the number of ways to choose a subset of $\{1,\dots,m-1\}$ of size $\ell$ and partition it into $i$ nonempty subsets is $\binom{m-1}{\ell}\lb\ell\atop i\rb$.
These choices are in bijection with partitions of $\{1,\dots,m\}$ into $i+1$ nonempty subsets with the condition that the subset containing $m$ is of size $m-\ell$.
Summing over $\ell$ yields the result.
\end{proof}

We now have the tools to use character theory to compute the moments of $n\cdot\mathbf{W}_{n,r}$; the moments of $\mathbf{W}_{n,r}$ will follow as an immediate corollary.

\begin{momenttheorem}\label{momenttheorem}
Let $m\in\Z_{\geq0}$. 
For all $n,r\in\Z_{\geq1}$,
\[
\M_m{\lp n\cdot\mathbf{W}_{n,r}\rp}
=\sum_{i=0}^{\min{\lp\lb m,r\rb\rp}}
{\lb m\atop i\rb n^{m-i}}
\]
\end{momenttheorem}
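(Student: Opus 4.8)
The plan is to compute $\M_m(n\cdot\mathbf{W}_{n,r})$ directly via character theory. Recall that $n\cdot\mathbf{W}_{n,r}$ is the permutation character $\chi$ of the action of $G:=C_n\wr S_r$ on $B(n,r)$, so $\chi(\sigma)=|\Fix\sigma|$ counts fixed points of $\sigma$ on $B(n,r)$. The $m$th moment is $\M_m(\chi)=\frac{1}{|G|}\sum_{\sigma\in G}\chi(\sigma)^m$. The standard trick is that $\chi^m$ is the permutation character of the diagonal action of $G$ on $B(n,r)^m$, since a point $(b_1,\dots,b_m)$ is fixed iff each $b_\ell$ is fixed. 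Hence by Burnside's lemma $\M_m(\chi)=\frac{1}{|G|}\sum_\sigma |\Fix_{B(n,r)^m}\sigma|$ equals the number of orbits of $G$ on $B(n,r)^m$. So the whole computation reduces to counting $G$-orbits on $m$-tuples from $B(n,r)$.

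Next I would set up the orbit count. A tuple $(b_1,\dots,b_m)$ with $b_\ell=(\zeta_\ell,i_\ell)$ has a ``type'': the set partition $P$ of $\{1,\dots,m\}$ given by $\ell\sim\ell'$ iff $i_\ell=i_{\ell'}$ (which block of $\{1,\dots,r\}$ the entry lives in), together with, for each block of $P$, the pattern of $C_n$-components. Since $S_r$ acts on the $\{1,\dots,r\}$-coordinate by arbitrary permutations, two tuples whose underlying coordinates use the same number of distinct values $i$ are $S_r$-equivalent provided there are enough slots, i.e. provided the number of distinct $i$-values is at most $r$ — this is where the $\min(m,r)$ truncation enters. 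Within a fixed block (all $b_\ell$ sharing the same $i$), the relevant $C_n\wr S_r$ element acting is the $C_n$-coordinate $\zeta_i$ acting by translation on the $C_n$-components $\zeta_\ell$; translation by $C_n$ on a tuple of elements of $C_n$ has orbits classified precisely by the differences $\zeta_\ell-\zeta_{\ell'}$, and the number of such orbits on a block of size $s$ is $n^{s-1}$ (fix one representative to $0$, the rest are free).

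Assembling these: choosing a $G$-orbit on $B(n,r)^m$ amounts to choosing a set partition $P$ of $\{1,\dots,m\}$ into some number $i$ of blocks with $1\le i\le \min(m,r)$ (each block is a ``slot'' in $\{1,\dots,r\}$, and $S_r$ identifies all labelings of the slots), and then, independently on each block of size $s_t$, choosing one of $n^{s_t-1}$ translation-orbits; the product over blocks of $n^{s_t-1}$ is $n^{m-i}$ since $\sum_t s_t=m$ and there are $i$ blocks. Therefore the orbit count is $\sum_{i=0}^{\min(m,r)}\lb m\atop i\rb n^{m-i}$, which is exactly the claimed formula (the $i=0$ term contributes only when $m=0$, consistent with the stated convention). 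I would double-check the $m=0$ and $r\ge m$ edge cases and confirm the $S_r$-identification is legitimate — that one genuinely needs $i\le r$ and not merely $i\le m$.

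The main obstacle I expect is the careful justification that $S_r$ acts transitively enough to collapse all ways of placing $i$ occupied slots into $\{1,\dots,r\}$ into a single orbit-datum, and simultaneously that the $C_n^r$-part only acts ``block-locally'' (the $\zeta_j$ for unoccupied $j$ are irrelevant, and occupied blocks are acted on independently) — in other words, making the orbit decomposition of $C_n\wr S_r$ on $B(n,r)^m$ genuinely factor as (choice of partition into $\le\min(m,r)$ blocks) $\times$ (product of per-block translation orbit counts). Once that structural claim is nailed down, the Stirling-number bookkeeping (and \Cref{stirlinglemma}, if an inductive route on $m$ is preferred over the direct orbit-counting route) is routine. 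A fully rigorous alternative is induction on $m$: peel off $b_m$, condition on which of the $i$ already-used slots it lands in (or a new one), split the $C_n$-component accordingly, and invoke \Cref{stirlinglemma} to recombine — but I would lead with the Burnside/orbit-counting argument as the conceptual heart.
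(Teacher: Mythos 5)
Your argument is correct, but it is a genuinely different route from the one in the paper. You compute $\M_m(\chi)$ for $\chi=n\cdot\mathbf{W}_{n,r}$ by observing that $\chi^m$ is the permutation character of the diagonal action of $C_n\wr S_r$ on $B(n,r)^m$ and then counting orbits via Burnside; the orbit data (the set partition of $\lb1,\ldots,m\rb$ recorded by coincidences of the $S_r$-coordinates, which can have at most $\min(m,r)$ blocks, together with one of $n^{s-1}$ translation-orbits of $C_n$ on each block of size $s$) is exactly right, and the structural claim you flag as the main obstacle is easily verified: two $m$-tuples lie in one orbit iff they induce the same partition and the same within-block differences of $C_n$-components, since any matching of the occupied slots extends to a permutation in $S_r$ and the $\zeta_j$ on unoccupied slots are free. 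This gives $\sum_{i=0}^{\min(m,r)}\lb m\atop i\rb n^{m-i}$ directly, with the Stirling numbers appearing with their defining meaning and the truncation at $r$ explained by the bound on occupied slots. The paper instead proceeds by induction on $m$: it writes $\chi$ as induced from the principal character of a point stabilizer $W_\alpha\simeq C_n\wr S_{r-1}$, applies Frobenius reciprocity to get $\la(\chi_\alpha)^{m-1},\mathbbm{1}_\alpha\ra$, decomposes $\chi_\alpha=n\mathbbm{1}_\alpha+\psi$, expands binomially, invokes the induction hypothesis for the smaller wreath product, and recombines using the identity of Lemma~\ref{stirlinglemma}. Your route buys a one-step, conceptually transparent proof that dispenses with that auxiliary Stirling identity; the paper's route buys a purely character-theoretic induction whose bookkeeping stays local (each step only passes from $S_r$ to $S_{r-1}$), at the cost of needing the combinatorial lemma. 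Either way the edge cases you mention ($m=0$, and the vanishing of the $i=0$ term for $m\geq1$ under the paper's convention for $\lb m\atop 0\rb$) check out.
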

\begin{proof}
We induct on $m$. The $m=0$ case is trivial and the $m=1$ case follows immediately from Burnside's Lemma: indeed, $C_n\wr S_r$ acts transitively on $B(n,r)$, so 
\[
\frac{1}{\lv C_n\wr S_r\rv}\cdot
\sum_{\sigma\in C_n\wr S_r}n\cdot\mathbf{W}_{n,r}(\sigma) = 1=\left\{1\atop 1\right\}n^{0}.
\]

Now assume that $m\geq 2$ and that the statement of the theorem is true for every $\ell\in\lb0,\ldots,m-1\rb$.
Choose any $n,r\in\Z_{\geq1}$ and any $\alpha\in B(n,r)$.
To ease notation, let
\begin{itemize}
\item
$W=C_n\wr S_r$,
\item
$\chi=n\cdot\mathbf{W}_{n,r}$,
\item
$W_\alpha=\lb\sigma\in W\mid\alpha\in\Fix{\sigma}\rb$, the isotropy subgroup of $\alpha$,
\item
$\chi_\alpha$ be the restriction of $\chi$ to $W_\alpha$, and
\item
$\mathbbm{1}_\alpha$ be the principal character of $W_\alpha$.
\end{itemize}
Additionally, let $\langle\cdot,\cdot\rangle$ be the usual inner product of class functions.
Since $\chi$ is the permutation character of the action of $W$ on $B(n,r)$, we know that $\chi$ is 
induced from $\mathbbm{1}_\alpha$; see~\cite[Lemma 5.14]{IsaacsCharacterTheory}.
That is, $\chi =\lp \mathbbm{1}_\alpha\rp^W$.
By Frobenius reciprocity, we compute
\[
\M_m{\lp n\cdot\mathbf{W}_{n,r}\rp}
=\frac{1}{|W|}\sum_{\sigma\in W}\chi^m(\sigma)
=\la\chi^{m-1},\chi\ra
=\la\chi^{m-1},\lp\mathbbm{1}_\alpha\rp^W\ra
=\la\lp\chi_\alpha\rp^{m-1},\mathbbm{1}_\alpha\ra.
\]
Observe that if $\alpha=(\zeta,i)$, then $W_\alpha$ acts trivially on the $n$ elements of $C_n\times\{i\}$ and acts transitively on the remaining elements in $B(n,r)$, if there are any.
There are two cases.
\begin{itemize}
\item
If $r\geq2$, the preceding observation implies both that we can restrict the action of $W_\alpha$ to an action on $B(n,r)\setminus\lp C_n\times\{i\}\rp$ as well as that $W_\alpha\simeq C_n\wr S_{r-1}$.
If we let $\psi$ be the permutation character of this restricted action, then the induction hypothesis implies that for all $\ell\in\lb0,\ldots, m-1\rb$,
\[
\frac{1}{\lv W_\alpha\rv}\sum_{\sigma\in W_\alpha}{\psi^\ell(\sigma)}
=\M_\ell{\lp n\cdot\mathbf{W}_{n,r-1}\rp}
=\sum_{i=0}^{\min{\lp\lb r-1,\ell\rb\rp}}{\lb \ell\atop i\rb n^{\ell-i}}.
\]
\item
On the other hand, if we are in the case where $r=1$, then $W_\alpha$ is the trivial group.
In this case, we let $\psi$ be the trivial class function of $W_\alpha$---that is, $\psi$ evaluates to zero on the single element of $W_\alpha$.
We adopt the convention that $\psi^0=\mathbbm{1}_\alpha$, so that for all $\ell\in\lb0,\ldots, m-1\rb$,
\[
\frac{1}{\lv W_\alpha\rv}\sum_{\sigma\in W_\alpha}{\psi^\ell(\sigma)}
=\sum_{i=0}^{\min{\lp\lb r-1,\ell\rb\rp}}{\lb\ell\atop i\rb n^{\ell-i}},
\]
as in the previous case.
\end{itemize}
In either case, note that $\chi_\alpha=n\mathbbm{1}_\alpha+\psi$.
Thus,
\begin{align*}
\la\lp\chi_\alpha\rp^{m-1},\mathbbm{1}_\alpha\ra
&=\la\lp n\mathbbm{1}_\alpha+\psi\rp^{m-1},\mathbbm{1}_\alpha\ra&&\\
&=\sum_{\ell=0}^{m-1}
{\binom{m-1}{\ell}n^{m-1-\ell}\la\mathbbm{1}_\alpha,\psi^\ell\ra}&&\text{since }\psi^0=\mathbbm{1}_\alpha\\
&=\sum_{\ell=0}^{m-1}
{\binom{m-1}{\ell}n^{m-1-\ell}\lp\frac{1}{\lv W_\alpha\rv}
\sum_{\sigma\in W_\alpha}
{\psi^\ell(\sigma)}\rp}
&&\text{by definition of }\la\cdot,\cdot\ra\\
&=\sum_{\ell=0}^{m-1}
{\binom{m-1}{\ell}n^{m-1-\ell}
\sum_{i=0}^{\min{\lp\lb r-1,\ell\rb\rp}}
{\lb\ell\atop i\rb n^{\ell-i}}}
&&\text{by the induction hypothesis}\\
&=\sum_{i=0}^{\min{\lp\lb r-1,m-1\rb\rp}}
{n^{m-1-i}\sum_{\ell=i}^{m-1}
{\binom{m-1}{\ell}\lb \ell\atop i\rb}}\\
&=\sum_{i=0}^{\min{\lp\lb r-1,m-1\rb\rp}}
{n^{m-1-i}\lb m\atop i+1\rb}
&&\text{by \hyperref[stirlinglemma]{Lemma~\ref*{stirlinglemma}}}\\
& = \sum_{i=0}^{\min{\lp\lb r,m\rb\rp}}\left\{m\atop i\right\}n^{m-i},
\end{align*}
completing the proof.
\end{proof}

\begin{momentswecareabout}\label{momentswecareabout}
If $n,r\in\Z_{\geq1}$, then for all $m\in\Z_{\geq0}$,
\[
\M_m{\lp\mathbf{W}_{n,r}\rp}
=\sum_{i=0}^{\min{\lp\lb m,r\rb\rp}}{\lb m\atop i\rb n^{-i}}.
\]
In particular, if $m\leq r$, then
\[
\M_m{\lp\omega_{n,r}\rp}
=\M_m{\lp\rho_{1/n}\rp}.
\]
\end{momentswecareabout}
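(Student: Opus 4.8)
The plan is to derive this corollary directly from Theorem~\ref*{momenttheorem}. First I would observe that the random variable $n\cdot\mathbf{W}_{n,r}$ and the random variable $\mathbf{W}_{n,r}$ are related by a deterministic scaling, so their moments satisfy $\M_m(\mathbf{W}_{n,r}) = n^{-m}\M_m(n\cdot\mathbf{W}_{n,r})$ for every $m\in\Z_{\geq0}$ (this is immediate from the definition of the $m$th moment as $\sum_j j^m\omega(j)$, replacing $j$ by $j/n$). Plugging in the formula from Theorem~\ref*{momenttheorem} gives
\[
\M_m(\mathbf{W}_{n,r}) = n^{-m}\sum_{i=0}^{\min(m,r)}\lb m\atop i\rb n^{m-i} = \sum_{i=0}^{\min(m,r)}\lb m\atop i\rb n^{-i},
\]
which is the first displayed equation.

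For the second claim, I would specialize to the case $m\le r$: then $\min(m,r)=m$, so the sum runs over $i=0,\dots,m$ with no truncation, and we get $\M_m(\mathbf{W}_{n,r}) = \sum_{i=0}^m\lb m\atop i\rb n^{-i} = \sum_{i=0}^m\lb m\atop i\rb(1/n)^i$. By Fact~\ref*{pmoments} applied with $\lambda = 1/n$, this last expression is exactly $\M_m(\rho_{1/n})$. Finally, since $\omega_{n,r}$ is by definition the probability distribution associated to $\mathbf{W}_{n,r}$, we have $\M_m(\omega_{n,r}) = \M_m(\mathbf{W}_{n,r})$, which completes the argument.

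There is essentially no obstacle here: the corollary is a bookkeeping consequence of the theorem and Fact~\ref*{pmoments}, and the only point requiring any care is the truncation of the Stirling sum — one must note that it is precisely when $m$ exceeds $r$ that $\M_m(\omega_{n,r})$ begins to fall short of $\M_m(\rho_{1/n})$ (because the terms with $m\ge i>r$, all of which have positive Stirling coefficients and positive powers of $n^{-1}$, are dropped). That dichotomy is exactly what feeds into the moment comparison in Corollary~\ref*{finalmomentscor}, so it is worth stating the equality with the hypothesis $m\le r$ made explicit, as written.
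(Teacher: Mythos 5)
Your proof is correct and is essentially the paper's argument: the paper dismisses this corollary as immediate from Theorem~\ref{momenttheorem} and Fact~\ref{pmoments}, and your write-up just spells out the scaling $\M_m(\mathbf{W}_{n,r})=n^{-m}\M_m(n\cdot\mathbf{W}_{n,r})$ and the substitution $\lambda=1/n$, both of which are exactly the intended bookkeeping.
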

\begin{proof}
Immediate from \hyperref[momenttheorem]{Theorem~\ref*{momenttheorem}} and \hyperref[pmoments]{Fact~\ref*{pmoments}}.
\end{proof}

\begin{momentremark}
In~\cite{ChowMansour}, the authors also studied the moments of the distributions $\omega_{n,r}$ in the context of an application of their main results on moment generating functions.
However, their computation of the moments of $\omega_{n,r}$ contains an error: they state that $\M_m\lp\omega_{n,r}\rp=\M_m{\lp n\cdot\mathbf{W}_{n,r}\rp}
=\sum_{i=0}^r
{\lb m\atop i\rb n^{m-i}}$, but this equality only holds when $m\leq r$. 
Our methods are entirely different, relying instead on character theory.
%
\end{momentremark}



\subsection{Bounds on fixed point probabilities in wreath products}
\label{probbounds}

Having computed the moments of $\omega_{n,r}$ for all $n,r\in\Z_{\geq1}$, we now bound $\omega_{n,r}(j)$ for any $j\in\Z_{\geq 0}$.
Before proceeding, we introduce a bit of notation.
For any $r\in\Z_{\geq1}$ and $i\in\lb0,\ldots,r\rb$, let $D_{r,i}$ be the $(r,i)$th rencontres number; i.e. the number of permutations in $S_r$ with exactly $i$ fixed points.
For convenience, we set $D_{0,0}=1$.
Note that $D_{r,0}$ is the number of derangements in $S_r$.

\begin{fixedpointprob}\label{fixedpointprob}
Let $k\in\Z_{\geq2}$ and $r,n\in\Z_{\geq1}$.
For any $j\in\Z_{\geq0}$, if $j\leq r$, then
\[
\lv\omega_{n,r}(j)
-\frac{e^{-1/n}}{j!n^j}\rv
<\frac{1+2^{r-j}}{j!n^j(r-j)!}.
\]
If $j>r$, then $\omega_{n,r}(j)=0$.
\end{fixedpointprob}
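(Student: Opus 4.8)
The plan is to exploit the exact moment formula from Corollary~\ref*{momentswecareabout} together with the fact that the moments of $\mathbf{W}_{n,r}$ agree with those of $\rho_{1/n}$ up to order $r$. The key observation is that the distribution $\omega_{n,r}$ is supported on $\{0,1,\dots,r\}$ (the case $j>r$ is immediate: if $\lv\Fix\sigma\rv=jn$ then $\sigma$ fixes $j$ of the $r$ blocks $C_n\times\{i\}$, so $j\le r$). Thus $\omega_{n,r}$ is determined by its first $r$ moments, which equal the first $r$ moments of the Poisson distribution $\rho_{1/n}$. I would first reduce the problem to a linear-algebra statement: writing $p_j=\omega_{n,r}(j)$ and $q_j=\frac{e^{-1/n}}{j!n^j}$ (the Poisson weight), the vector $(p_j-q_j)_{0\le j\le r}$ is annihilated by the Vandermonde-type matrix $(j^m)_{0\le m\le r,\,0\le j\le r}$ once one accounts for the Poisson tail $\sum_{j>r}q_j$. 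Equivalently, $\sum_{j=0}^r j^m p_j = \sum_{j=0}^\infty j^m q_j$ for $m\le r$, so $\sum_{j=0}^r j^m(p_j-q_j) = -\sum_{j>r} j^m q_j$.

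The cleaner route, and the one I would actually carry out, is via finite differences. Define $e_j = p_j - q_j$ for $0\le j\le r$. I would use the explicit rencontres-number formula for $\omega_{n,r}$: by summing over the cycle type of the $S_r$-part, one gets $\omega_{n,r}(j) = \sum_{\text{(choices)}} \cdots$, concretely $p_j = \frac{1}{j!\,n^j}\sum_{s=0}^{r-j}\frac{(-1)^s}{s!\,n^s}\cdot(\text{correction})$ — more precisely I expect $p_j$ has the shape $\frac{1}{j!n^j}\sum_{s=0}^{r-j}\frac{(-1)^s}{s!n^s} + (\text{something small involving }D_{r-j,0}\text{ type terms})$. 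The idea: conditioning on which $j$ blocks are fixed pointwise by the permutation part and requiring the $C_n$-coordinates on those blocks to be trivial gives a factor $1/n^j$; the remaining $r-j$ blocks must be permuted by a permutation with a constrained number of... — here I would invoke an inclusion–exclusion on the non-fixed blocks. Comparing the resulting finite partial sum $\sum_{s=0}^{r-j}\frac{(-1)^s}{s!n^s}$ with the full series $e^{-1/n}=\sum_{s=0}^\infty\frac{(-1)^s}{s!n^s}$ produces the tail $\bigl\lv\sum_{s>r-j}\frac{(-1)^s}{s!n^s}\bigr\rv \le \frac{1}{(r-j+1)!\,n^{r-j+1}}$ by the alternating-series estimate, and bundling the lower-order correction terms (which I expect to be bounded by a geometric-type sum, hence the $2^{r-j}$) yields the stated bound $\frac{1+2^{r-j}}{j!n^j(r-j)!}$.

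The main obstacle I anticipate is getting the combinatorial identity for $\omega_{n,r}(j)$ exactly right, including the precise form of the correction term, so that the error decomposes as (alternating Poisson tail) + (geometric remainder) with the clean constant $1+2^{r-j}$. The subtlety is that a permutation in $S_r$ fixing exactly $j$ points is not the only way to get $\lv\Fix\sigma\rv = jn$: one also needs the $C_n$-parts along each non-trivial cycle to multiply to the identity (a "coboundary" condition), and along fixed points to be individually trivial. Counting these requires care — the coboundary condition on a cycle of length $\ell$ is satisfied by a $1/n$ fraction of the $n^\ell$ tuples but contributes to $\Fix$ only when the cycle is a fixed point, so actually non-fixed cycles never contribute to $\Fix\sigma$ at all, which simplifies matters: $\lv\Fix\sigma\rv = n\cdot\#\{\text{fixed points of the }S_r\text{-part among blocks with trivial }C_n\text{-label}\}$. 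Wait — re-examining, a block $i$ fixed by $\pi$ contributes $n$ to $\Fix\sigma$ iff $\zeta_i$ is trivial. So $\mathbf{W}_{n,r}(\sigma)$ counts $S_r$-fixed blocks carrying a trivial label, and $p_j = \sum_{t\ge j}\binom{t}{j}\frac{(n-1)^{t-j}}{n^t}\cdot\frac{D_{r,t}}{r!}$. Expanding $D_{r,t}/r! = \frac{1}{t!}\sum_{s=0}^{r-t}\frac{(-1)^s}{s!}$ and resumming should give the main term $\frac{e^{-1/n}}{j!n^j}$ plus the controllable error; the bookkeeping to extract the constant $1+2^{r-j}$ — splitting off the alternating tail of the exponential and bounding the doubly-indexed remainder by $2^{r-j}$ times the leading term — is the technical heart of the argument.
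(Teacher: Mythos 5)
Your final plan is essentially the paper's own proof: after your self-correction you arrive at exactly the paper's counting identity $\omega_{n,r}(j)=\sum_{t\ge j}\binom{t}{j}\frac{(n-1)^{t-j}}{n^{t}}\cdot\frac{D_{r,t}}{r!}$ (fixed blocks of the $S_r$-part carrying trivial $C_n$-label, enumerated by rencontres numbers), and the paper then extracts the main term $\frac{e^{-1/n}}{j!n^j}$ and the error exactly as you sketch, the only cosmetic difference being that it replaces $D_{r-i,0}$ by $(r-i)!/e$ with error $\pm1$ instead of your exact inclusion--exclusion expansion, and then bounds a Taylor tail plus a binomial-theorem sum by $\frac{2^{r-j}}{(r-j)!}$. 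The bookkeeping you defer is routine and does deliver a bound within the stated $\frac{1+2^{r-j}}{j!n^j(r-j)!}$, so the proposal is correct and follows the same route.
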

\begin{proof}
If $j>r$, the result is trivial, so suppose that $j\leq r$.
Choose $\sigma=\lp\lp\zeta_1,\ldots,\zeta_r\rp,\pi\rp\in C_n\wr S_r$. 
Note that if $\lv\Fix{\sigma}\rv=nj$, then $\pi$---acting on $\lb1,\ldots,r\rb$---has at least $j$ fixed points.
Moreover, there is a subset $R$ of the fixed points of $\pi$ such that
\begin{itemize}
\item $\lv R\rv=j$ and
\item if $i^\prime\in\lb1,\ldots,r\rb$ is a fixed point of $\pi$, then $i^\prime\in R$ if and only if $\zeta_{i^\prime}=1$.
\end{itemize}
Using this fact, and enumerating permutations $\pi$ by their number of fixed points, we conclude that
\[
\lv\lb\sigma\in C_n\wr S_r\mid\lv\Fix{\sigma}\rv=nj\rb\rv
=\sum_{i=0}^r{\binom{i}{j}D_{r,i}(n-1)^{i-j}n^{r-i}}.
\]
Now, for any $i\in\Z_{\geq0}$ it is simple to show $D_{r,i}={\binom{r}{i}}D_{r-i,0}$ (see~\cite[Lemma 3.4]{BG}), so we see
\[
\omega_{n,r}(j)
=\frac{1}{r!n^r}\sum_{i=j}^r{\binom{i}{j}\binom{r}{i}D_{r-i,0}(n-1)^{i-j}n^{r-i}}.
\]
Next, recall the well-known fact that for any $i\in\Z_{\geq0}$, the $(i,0)$th rencontres number $D_{i,0}$ satisfies $\frac{i!}{e}-1<D_{i,0}<\frac{i!}{e}+1$.
Thus,
\begin{align*}
\lv\omega_{n,r}(j)
-\frac{1}{r!n^je}\sum_{i=j}^r{\binom{i}{j}\binom{r}{i}(r-i)!\lp\frac{n-1}{n}\rp^{i-j}}\rv
<\frac{1}{r!n^j}\sum_{i=j}^r{\binom{i}{j}\binom{r}{i}\lp\frac{n-1}{n}\rp^{i-j}}.
\end{align*}
We will address the approximation and error terms in turn.
\begin{itemize}
\item
Approximating the Taylor series remainder of the approximation, we see that
\begin{align*}
\frac{1}{r!n^je}\sum_{i=j}^r{\binom{i}{j}\binom{r}{i}(r-i)!\lp\frac{n-1}{n}\rp^{i-j}}
&=\frac{1}{j!n^je}\sum_{i=j}^r{\frac{1}{(i-j)!}\lp\frac{n-1}{n}\rp^{i-j}}\\
&=\frac{1}{j!n^je}\sum_{i=0}^{r-j}{\frac{1}{i!}\lp\frac{n-1}{n}\rp^i}\\
&<\frac{e^{-1/n}}{j!n^j}+\frac{1}{j!n^je\cdot(r-j+1)!}.
\end{align*}
\item
As for the error term, we certainly know that
\[
\lp1+\frac{n-1}{n}\rp^{r-j}<2^{r-j},
\]
so the binomial theorem implies
\begin{align*}
\frac{1}{r!n^j}\sum_{i=j}^r{\binom{i}{j}\binom{r}{i}\lp\frac{n-1}{n}\rp^{i-j}}
&=\frac{1}{j!n^j}\sum_{i=j}^r{\frac{1}{(i-j)!(r-i)!}\lp\frac{n-1}{n}\rp^{i-j}}\\
&=\frac{1}{j!n^j(r-j)!}\sum_{i=0}^{r-j}{\binom{r-j}{i}\lp\frac{n-1}{n}\rp^i}\\
&< \frac{2^{r-j}}{j!n^j(r-j)!}.
\end{align*}
\end{itemize}
Thus, the theorem is true.
\end{proof}

For our applications, we must estimate those wreath products occurring as Galois groups of dynatomic polynomials, so we introduce the following notation.
\begin{komegan}
For any $k\in\Z_{\geq2}$ and $n\in\Z_{\geq1}$, let ${}_k\omega_n := \omega_{n,r_k(n)}.$
(Recall that $nr_k(n)$ is the degree of the $n$th dynatomic polynomial of $x^k+c\in\Q(c)[x]$, see \hyperref[rkn]{Definition~\ref*{rkn}}.)
\end{komegan}
\noindent For any $k\in\Z_{\geq2}$ and $j\in\Z_{\geq0}$, the sequence $\lb{}_k\omega_n(j)\rb_{n\in\Z{\geq1}}$ has particularly stable behavior as $n\to\infty$; we now record this behavior for use in the proof of \hyperref[thm:final]{Theorem~\ref*{thm:final}} in \hyperref[puttingitalltogether]{Section~\ref*{puttingitalltogether}}.

\begin{fixedpointprobcor}\label{fixedpointprobcor}
If $k\in\Z_{\geq2}$ and $j\in\Z_{\geq0}$, then
\[
{}_k\omega_n(j)
=\frac{e^{-1/n}}{j!n^j}
+O\lp\frac{2^{r_k(n)-j}}{\lp r_k(n)-j\rp!}\rp.
\]
In particular,
\[
{}_k\omega_n(j)
=\frac{1}{j!n^j}\lp1-\frac{1}{n}\rp+O\lp\frac{1}{n^{j+2}}\rp.
\]

\end{fixedpointprobcor}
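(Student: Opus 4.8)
The plan is to obtain both displayed identities as immediate consequences of \hyperref[fixedpointprob]{Theorem~\ref*{fixedpointprob}} --- specialized to $r=r_k(n)$ via \hyperref[komegan]{Definition~\ref*{komegan}} --- together with the lower bound $r_k(n)>k^{n-1}/n$ from \hyperref[rknfax]{Remark~\ref*{rknfax}} and a Taylor expansion of $e^{-1/n}$. Throughout, $k$ and $j$ are fixed and $n\to\infty$.

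First I would note that $r_k(n)>k^{n-1}/n\geq 2^{n-1}/n\to\infty$, so the hypothesis $j\leq r_k(n)$ of \hyperref[fixedpointprob]{Theorem~\ref*{fixedpointprob}} holds once $n$ is large enough, which is all the asymptotic assertion concerns. For such $n$, \hyperref[fixedpointprob]{Theorem~\ref*{fixedpointprob}} and \hyperref[komegan]{Definition~\ref*{komegan}} give
\[
\lv{}_k\omega_n(j)-\frac{e^{-1/n}}{j!n^j}\rv
<\frac{1+2^{r_k(n)-j}}{j!n^j\lp r_k(n)-j\rp!}
\leq\frac{2\cdot 2^{r_k(n)-j}}{\lp r_k(n)-j\rp!},
\]
the last step using $j!n^j\geq 1$ and $2^{r_k(n)-j}\geq 1$. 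This is exactly the first claimed estimate.

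For the ``in particular'' clause I would expand $e^{-1/n}=1-\frac{1}{n}+O\lp n^{-2}\rp$ (the tail being an alternating series bounded by $\frac{1}{2n^2}$), so that $\frac{e^{-1/n}}{j!n^j}=\frac{1}{j!n^j}\lp1-\frac{1}{n}\rp+O\lp n^{-j-2}\rp$. It then remains only to absorb the error term of the first part: since $r_k(n)$ grows at least exponentially in $n$ by \hyperref[rknfax]{Remark~\ref*{rknfax}}, the quantity $2^{r_k(n)-j}/\lp r_k(n)-j\rp!$ decays faster than any power of $n^{-1}$, hence is $O\lp n^{-j-2}\rp$. Adding the two estimates yields the second identity.

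The argument is entirely routine arithmetic. The only points needing (minor) attention are verifying that $j\leq r_k(n)$ eventually holds, so that \hyperref[fixedpointprob]{Theorem~\ref*{fixedpointprob}} is applicable, and confirming that the superexponentially small wreath-product error does not degrade the polynomial-order accuracy claimed in the second identity; neither is a genuine obstacle.
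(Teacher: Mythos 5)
Your proposal is correct and follows exactly the paper's route: the paper's proof is simply "immediate from Theorem~\ref{fixedpointprob}, Remark~\ref{rknfax}, and Taylor's Theorem," and you have spelled out precisely those three ingredients (specializing $r=r_k(n)$, using the exponential lower bound on $r_k(n)$ to make the wreath-product error superpolynomially small, and expanding $e^{-1/n}$). The only difference is that you make explicit the minor point that $j\leq r_k(n)$ for large $n$, which the paper leaves implicit.
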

\begin{proof}
This follows immediately from \hyperref[fixedpointprob]{Theorem~\ref*{fixedpointprob}}, \hyperref[rknfax]{Remark~\ref*{rknfax}}, and Taylor's Theorem.
\end{proof}



\section{The cycle structure of unicritical polynomials}
\label{cyclestructure}

Before proving \hyperref[bigtheorem]{Theorem~\ref*{bigtheorem}} and \hyperref[thm:final]{Theorem~\ref*{thm:final}}, we recall three important results, in the forms which will be most useful for the remainder of the paper.
In this section, for any polynomial $f=f(c,x)\in\Q[c][x]$ and any $a\in\Q$, we will write $f_a$ for the specialization of $f$ at $c=a$; that is, $f_a=f_a(x)=f(a,x)\in\Q[x]$.

The first result addresses the following inconvenience: for any $f\in\Z[x]$, $n\in\Z_{\geq1}$, and $p\in\mathcal{P}$, the polynomial $\lc\Phi_{f,n}\rc_p$ certainly vanishes on the period $n$ points of $\lp\F_p,[f]_p\rp$, but, as discussed in~\cite{BG}, it occasionally vanishes at points of lower period as well (indeed this can happen even in characteristic $0$---see \cite[Example 4.2]{SilvermanADS}).
Luckily, as long as $f^n(x)-x$ has distinct roots, this pathology can only occur for finitely many $p\in\mathcal{P}$.
We record this fact as \hyperref[rootsandcycles]{Proposition~\ref{rootsandcycles}}. 
\begin{rootsandcycles}\label{rootsandcycles}
Let $f\in\Z[x]$ and $n\in\Z_{\geq1}$, and suppose that $f^n(x)-x$ has no repeated roots.
If $j\in\Z_{\geq0}$, then for all but finitely many $p\in\mathcal{P}$,
\[
\lp\F_p,[f]_p\rp\text{ has precisely $j$ $n$-cycles}
\hspace{10px}\text{ if and only if }\hspace{10px}
\lc\Phi_{f,n}\rc_p\text{ has precisely $jn$ roots in }\F_p.
\]
\end{rootsandcycles}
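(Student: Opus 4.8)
The plan is to reduce the statement to a comparison of root sets modulo $p$ for all but finitely many $p$, using the hypothesis that $f^n(x)-x$ is separable over $\Q$. First I would recall that by M\"obius inversion we have the factorization $f^n(x)-x=\prod_{d\mid n}\Phi_{f,d}(x)$ in $\Z[x]$ (this uses that $f$ is monic so each $\Phi_{f,d}\in\Z[x]$ by Gauss's Lemma). Since $f^n(x)-x$ has no repeated roots in $\overline{\Q}$, its discriminant is a nonzero integer, and therefore $\lc f^n(x)-x\rc_p$ is separable for all $p$ not dividing that discriminant — in particular the distinct factors $\lc\Phi_{f,d}\rc_p$ for $d\mid n$ are pairwise coprime in $\F_p[x]$ for all but finitely many $p$, and each is itself separable.

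Next I would argue that for such $p$, the roots of $\lc\Phi_{f,n}\rc_p$ in $\F_p$ are \emph{exactly} the points of exact period $n$ of $\lp\F_p,[f]_p\rp$. One inclusion is easy: any point of period $n$ is a root of $\lc f^n(x)-x\rc_p$ and, since the factors are coprime, it is a root of exactly one $\lc\Phi_{f,d}\rc_p$; if $d$ were a proper divisor of $n$ the point would have period dividing $d<n$, a contradiction, so it is a root of $\lc\Phi_{f,n}\rc_p$. For the reverse inclusion, a root $\alpha\in\F_p$ of $\lc\Phi_{f,n}\rc_p$ satisfies $[f]_p^n(\alpha)=\alpha$, so $\alpha$ has some period $d\mid n$; if $d<n$ then $\alpha$ is also a root of $\lc\Phi_{f,d}\rc_p$, contradicting coprimality of the factors. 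Hence the exact-period-$n$ points are precisely the roots of $\lc\Phi_{f,n}\rc_p$, and since each such root is simple (separability), their number equals the number of distinct roots. The $n$-cycles partition these points into orbits of size exactly $n$, so there are precisely $j$ $n$-cycles if and only if $\lc\Phi_{f,n}\rc_p$ has exactly $jn$ roots in $\F_p$.

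The main obstacle — really the only subtle point — is handling the finitely many bad primes cleanly: one must be sure that the exceptional set consists exactly of primes dividing the discriminant of $f^n(x)-x$ (equivalently, primes at which reduction fails to preserve separability, hence fails to keep the $\Phi_{f,d}$ coprime and square-free), and that away from this set the combinatorial argument above goes through verbatim. Everything else is bookkeeping: translating "has $j$ $n$-cycles" into a statement about the size of the exact-period-$n$ locus, and using that this locus has cardinality $jn$ precisely when $\lc\Phi_{f,n}\rc_p$ has $jn$ distinct roots. I would close by remarking that the hypothesis that $f^n(x)-x$ is separable is genuinely needed (it is automatically satisfied for $f=x^k+c\in\Q(c)[x]$ by Morton's Theorem, which is how this proposition will be applied).
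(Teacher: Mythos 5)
Your argument is correct, but it is not the route the paper takes: the paper disposes of this proposition in a single line, citing Corollary~4.3 of Bridy--Garton (which in turn rests on Theorem~4.5 of Silverman's book), and writes out no argument at all. What you have produced is in effect a self-contained proof of that cited result: reduce the factorization $f^n(x)-x=\prod_{d\mid n}\Phi_{f,d}(x)$ modulo $p$, throw away the finitely many primes dividing the discriminant of $f^n(x)-x$ (and any denominators), and use separability of the reduction to make the factors pairwise coprime and squarefree, so that the $\F_p$-roots of $\lc\Phi_{f,n}\rc_p$ are exactly the points of exact period $n$; the cycle count then follows by partitioning these points into orbits of size $n$. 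Your approach buys transparency---it shows precisely where the separability hypothesis and the exceptional set of primes enter---at the cost of redoing what the paper simply imports from the literature. Two small points of hygiene: the proposition does not assume $f$ monic, so Gauss's Lemma is not available in general; but $\Phi_{f,n}\in\Q[x]$ always, and one just enlarges the exceptional set by the primes dividing the leading coefficient of $f$ and the denominators of the $\Phi_{f,d}$, after which your argument runs unchanged (and in the paper's applications $f=x^k+a$ is monic anyway). Also, in your reverse inclusion, a root of exact period $d<n$ is a root of some $\lc\Phi_{f,e}\rc_p$ with $e\mid d$, not necessarily of $\lc\Phi_{f,d}\rc_p$ itself; the contradiction with coprimality is the same, so this is only a matter of wording.
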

\begin{proof}
This is a trivial generalization of \cite[Corollary~4.3]{BG}, which follows from \cite[Theorem 4.5]{SilvermanADS}.
\end{proof}

Next, we state the forms of the \ref{HIT} and \ref{FDT} which will be most useful to us in the following sections.

\begin{named}{Hilbert Irreducibility Theorem}\label{HIT}
Let $f(c,x)\in\Z[c][x]$, let $K$ be the splitting field of $f(c,x)$ over $\Q(c)$, and for any $a\in\Z$, let $K_a$ be the splitting field of $f_a$ over $\Q$.
Suppose that $f(c,x)$ has no repeated roots \textup{(}in $\overline{\Q(c)}$\textup{)}. Then there exists a ``thin set'' $\mathcal{A}\subset\Z$ such that for all $a\in\Z\setminus\mathcal{A}$,
\[
f_a\text{ has no repeated roots}
\hspace{10px}\text{and}\hspace{10px}
\Gal{\lp K_a/\Q\rp}\simeq\Gal{\lp K/\Q(c)\rp}.
\]
\end{named}
\begin{HITremark}\label{HITremark}
For details on the connection between the Hilbert Irreducibility Theorem and Galois theory, see, for example,~\cite{CohenHIT},~\cite[Chapter VIII]{LangDiophantineGeometry}, and~\cite[Chapter 1]{Volklein}.

As for the size of ``thin'' sets, for any thin set $\mathcal{A}$ there is some constant $C_\mathcal{A}\in\R_{>0}$ such that for all $X\in\Z_{\geq0}$,
\[
\lv\lb a\in\mathcal{A}\mid-X\leq a\leq X\rb\rv\leq C_\mathcal{A}\sqrt{X}.
\]
(See~\cite{Serre}, Section~9.7, for more details).
\end{HITremark}


\begin{named}{Frobenius Density Theorem}\label{FDT}
%
%
Suppose that $f(x)\in\Z[x]$ is a monic polynomial with no repeated roots, and fix any $j\in\Z_{\geq0}$.
Let $G=\Gal{(f/\Q)}$ and $P\subseteq\mathcal{P}$ be the set of primes $p$ such that $[f]_p$ has exactly $j$ roots in $\F_p$. Then
\[
\delta(P)
=\frac{1}{\lv G\rv}\cdot
\lv\lb\sigma\in G
\mid
\sigma\text{ fixes exactly $j$ roots of $f$}
\rb\rv.
\]
\end{named}
\noindent (See~\cite{SL} for more details on this theorem.)

\begin{FDTremark}\label{FDTremark}
We will use this theorem in the following form.
Fix any finite sets $\mathcal{M},\mathcal{N}$ such that $\mathcal{N}\subseteq\mathcal{M}\subseteq\Z_{\geq1}$.
Suppose that for all $m\in\mathcal{M}$ there exist monic polynomials $f_m\in\Z[x]$, with splitting fields $K_m$ and Galois groups $G_m=\Gal{\lp K_d/\Q\rp}$.
Suppose that $\prod_{m\in\mathcal{M}}{f_m}$ has no repeated roots and for any $m\in\mathcal{M}$, the field $K_m$ is linearly disjoint from the compositum of $\lb K_{m^\prime}\mid m^\prime\in\mathcal{M}\setminus\lb m\rb\rb$.
Then for any function $\varphi\colon\mathcal{N}\to\Z_{\geq0}$,
\begin{align*}
\delta\lp\bigcap_{i\in\mathcal{N}}\lb p\in\mathcal{P}\mid\lc f_i\rc_p\text{ has precisely }\varphi(i)\text{ roots in }\F_p\rb\rp\hspace{-100px}\\
&=\prod_{i\in\mathcal{N}}{\frac{1}{\lv G_i\rv}\cdot\lv \lb\sigma\in G_i\mid\sigma\text{ fixes exactly $\varphi(i)$ roots of }f_i\rb\rv}.
\end{align*}
\end{FDTremark}

\subsection{How random is the cycle structure of unicritical polynomials?}
\label{cyclestructuremoments}

We now prove our main theorem about the distribution of cycles in $\lc x^k+a\rc_\mathcal{P}$, for $k\in\Z_{\geq2}$ and most $a\in\Z$.
Afterwards, we deduce a density result that encompasses \emph{all} integers $a\in\Z$.

\begin{bigtheorem}\label{bigtheorem}
If $k\in\Z_{\geq2}$ and $\mathcal{N}$ is a finite subset of $\Z_{\geq1}$, then there is a thin set $\mathcal{A}_{k,\mathcal{N}}$ with the property that for any $a\in\Z\setminus\mathcal{A}_{k,\mathcal{N}}$ and any function $\varphi\colon\mathcal{N}\to\Z_{\geq0}$,
\[
\delta\lp\lb p\in\mathcal{P}\mid\text{for all $i\in \mathcal{N}$, $\lp\F_p,\lc x^k+a\rc_p\rp$ has precisely $\varphi(i)$ $i$-cycles}\rb\rp=\prod_{i\in\mathcal{N}}{{}_k\omega_i(\varphi(i))}.
\]
In particular, if $a\in\Z\setminus\mathcal{A}_{k,\mathcal{N}}$ and $i\in\mathcal{N}$, then for all $j\in\Z_{\geq0}$,
\[
\delta\lp\lb p\in\mathcal{P}\mid\lp\F_p,\lc x^k+a\rc_p\rp\text{ has precisely $j$ $i$-cycles}\rb\rp
={}_k\omega_i(j).
\]
\end{bigtheorem}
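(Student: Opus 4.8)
The plan is to combine Morton's Theorem, the Hilbert Irreducibility Theorem, and the Frobenius Density Theorem (in the form of \hyperref[FDTremark]{Remark~\ref*{FDTremark}}) with \hyperref[rootsandcycles]{Proposition~\ref*{rootsandcycles}}, reducing the claim to a fixed-point count in the wreath products $C_i \wr S_{r_k(i)}$ that was already carried out in \hyperref[machine]{Section~\ref*{machine}}. First I would set $f = f(c,x) = x^k + c \in \Q(c)[x]$ and apply Morton's Theorem: $f^N(x)-x$ has no repeated roots where $N = \mathrm{lcm}(\mathcal{N})$ (equivalently, for each $i \in \mathcal{N}$, $f^i(x)-x$ has no repeated roots), the roots of $\Phi_{f,i}$ are exactly the period-$i$ points, $\Gal(\Sigma_{f,i}/\Q(c)) \simeq C_i \wr S_{r_k(i)}$ acting on the period-$i$ points as the standard action on $B(i,r_k(i))$, and the fields $\{\Sigma_{f,i} \mid i \in \mathcal{N}\}$ are mutually linearly disjoint over $\Q(c)$ (this is the linear disjointness clause of Morton's Theorem applied to the divisor-closed set generated by $\mathcal{N}$; restricting to $\mathcal{N}$ itself preserves disjointness).

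Next I would invoke the \ref{HIT} applied to $\prod_{i \in \mathcal{N}} \Phi_{f,i}(x) \in \Z[c][x]$ (which has no repeated roots by Morton), obtaining a thin set $\mathcal{A}'_{k,\mathcal{N}} \subseteq \Z$ such that for $a \notin \mathcal{A}'_{k,\mathcal{N}}$, each $\Phi_{f,i}$ specialized at $c=a$ has no repeated roots, $\Gal((\Phi_{f,i})_a/\Q) \simeq C_i \wr S_{r_k(i)}$ with the same $B(i,r_k(i))$ action, and the splitting fields of the $(\Phi_{f,i})_a$ over $\Q$ remain mutually linearly disjoint. (One must check that HIT can be applied simultaneously to produce a single thin set with all these properties: this is standard, since the Galois group of the compositum is the product, and a thin set guaranteeing the compositum specializes correctly also guarantees each factor and the disjointness; alternatively take a finite union of thin sets, which is thin.) Then I would enlarge $\mathcal{A}'_{k,\mathcal{N}}$ to a thin set $\mathcal{A}_{k,\mathcal{N}}$ by throwing in, for each $a$, nothing further at the level of $a$—rather, for each fixed $a \notin \mathcal{A}'_{k,\mathcal{N}}$ there is a \emph{finite} exceptional set of primes from \hyperref[rootsandcycles]{Proposition~\ref*{rootsandcycles}} (applied to $f = x^k+a$ and each $i \in \mathcal{N}$), but finitely many primes have density zero, so they do not affect $\delta$; hence no further modification of the thin set is needed.

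With these reductions in place, for $a \notin \mathcal{A}_{k,\mathcal{N}}$ and $\varphi\colon\mathcal{N}\to\Z_{\geq0}$, \hyperref[rootsandcycles]{Proposition~\ref*{rootsandcycles}} lets me replace the event ``$(\F_p, [x^k+a]_p)$ has precisely $\varphi(i)$ $i$-cycles'' by ``$[\Phi_{x^k+a,i}]_p$ has precisely $i\varphi(i)$ roots in $\F_p$'' up to a density-zero set of primes. Applying \hyperref[FDTremark]{Remark~\ref*{FDTremark}} with $\mathcal{M} = \mathcal{N}$, $f_i = (\Phi_{x^k+a,i})$, $G_i \simeq C_i \wr S_{r_k(i)}$, and the function $i \mapsto i\varphi(i)$, the target density factors as $\prod_{i\in\mathcal{N}} \frac{1}{|C_i \wr S_{r_k(i)}|}|\{\sigma \in C_i \wr S_{r_k(i)} \mid \sigma \text{ fixes exactly } i\varphi(i) \text{ elements of } B(i,r_k(i))\}|$. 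By the definition of $\omega_{i,r_k(i)}$ and $\mathbf{W}_{i,r_k(i)}$ in \hyperref[machine]{Section~\ref*{machine}}, each factor is exactly $\omega_{i,r_k(i)}(\varphi(i)) = {}_k\omega_i(\varphi(i))$ (here the observation that $i \mid |\Fix \sigma|$ makes the count ``exactly $i\varphi(i)$ fixed points'' equal to ``$\mathbf{W}_{i,r_k(i)} = \varphi(i)$''). The ``in particular'' statement follows by taking $\mathcal{N} = \{i\}$ and $\varphi(i) = j$, or by summing the general formula over all $\varphi$ with $\varphi(i) = j$ and using that $\sum_{j'} {}_k\omega_{i'}(j') = 1$. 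The main obstacle I anticipate is the bookkeeping in assembling a \emph{single} thin set that simultaneously delivers all the conclusions of HIT (correct Galois groups, correct actions, and mutual linear disjointness of all the specialized splitting fields), together with verifying that Morton's linear disjointness statement for the full divisor-closed family descends to the subfamily indexed by $\mathcal{N}$; once that is cleanly set up, the rest is a direct chain of citations.
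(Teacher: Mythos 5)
Your strategy is the same as the paper's: \ref{mortonpower} plus the \ref{HIT} to control the specialized Galois groups and their disjointness, \hyperref[rootsandcycles]{Proposition~\ref*{rootsandcycles}} to convert cycle counts into root counts of dynatomic polynomials mod $p$, and the \ref{FDT} in the form of \hyperref[FDTremark]{Remark~\ref*{FDTremark}} to produce a product of fixed-point proportions, each factor being ${}_k\omega_i(\varphi(i))$ by the definition of $\omega_{i,r_k(i)}$. There is, however, one genuine gap in your reduction: you apply the \ref{HIT} only to $\prod_{i\in\mathcal{N}}\Phi_{f,i}$ and then assert that ``no further modification of the thin set is needed'' before invoking \hyperref[rootsandcycles]{Proposition~\ref*{rootsandcycles}}. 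That proposition requires $f_a^i(x)-x$ to have no repeated roots, and $f_a^i(x)-x=\prod_{d\mid i}\Phi_{f_a,d}(x)$ involves the dynatomic polynomials of \emph{all} divisors $d$ of $i$, most of which need not lie in $\mathcal{N}$. Squarefreeness of the specialized product over $\mathcal{N}$ does not prevent $\Phi_{f_a,i}$ from sharing a root with some $\Phi_{f_a,d}$ for $d\mid i$, $d\notin\mathcal{N}$, i.e.\ from having a root whose exact period is $d<i$; the abstract isomorphism of Galois groups supplied by HIT does not rule this out either, since it says nothing about the $f_a$-equivariance of the identification of roots with $B(i,r_k(i))$. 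When such a degeneration occurs, the equivalence between ``precisely $\varphi(i)$ $i$-cycles'' and ``precisely $i\varphi(i)$ roots of $\lc\Phi_{f_a,i}\rc_p$'' can fail for a positive density of $p$, not merely for the finitely many exceptional primes of the proposition, so the displayed density formula is not justified as written.

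The repair is small and is exactly what the paper does: run the whole argument with a divisor-closed family (the paper takes $\mathcal{N}_0$ to be the set of positive divisors of $N_0!$ for some $N_0>\max\lp\mathcal{N}\rp$), so that the Hilbert specialization also delivers the squarefreeness of $\lp f_a\rp^N(x)-x$ --- property (1) in the paper's proof --- which is the hypothesis of \hyperref[rootsandcycles]{Proposition~\ref*{rootsandcycles}} for every $i\in\mathcal{N}$. In your setup it suffices to apply the \ref{HIT} to $f^N(x)-x$ with $N=\mathrm{lcm}\lp\mathcal{N}\rp$ (equivalently to $\prod_{d\mid N}\Phi_{f,d}$), or to observe that $\Disc_x\lp f^N(x)-x\rp\in\Z[c]$ is a nonzero polynomial by \ref{mortonpower}, so this squarefreeness fails for only finitely many $a$, which can be absorbed into $\mathcal{A}_{k,\mathcal{N}}$. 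With that adjustment, your remaining steps --- restricting Morton's linear disjointness to the subfamily indexed by $\mathcal{N}$, taking $\mathcal{M}=\mathcal{N}$ in \hyperref[FDTremark]{Remark~\ref*{FDTremark}}, identifying each factor with ${}_k\omega_i(\varphi(i))$ via the divisibility $i\mid\lv\Fix{\sigma}\rv$, and deducing the ``in particular'' statement --- are correct and coincide with the paper's proof.
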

\begin{proof}
Choose any $N_0>\max{\lp\mathcal{N}\rp}$, let $N=N_0!$, and let let $\mathcal{N}_0$ be the set of positive integer divisors of $N$, so that $\mathcal{N}\subseteq\mathcal{N}_0$.
Set $f=x^k+c\in\Q(c)[x]$, so that if $a\in\Z$, then $f_a=x^k+a\in\Z[x]$.
Next, for any such $a$, let
\[
\mathcal{F}(a)=\lb\Sigma_{f_a,d}
\mid d\in\mathcal{N}_0\rb.
\]
By \ref{mortonpower} and the \hyperref[HIT]{Hilbert Irreducibility Theorem}, there exists a thin set of integers $\mathcal{A}_{k,\mathcal{N}}$ such that for all $a\in\Z\setminus\mathcal{A}_{k,\mathcal{N}}$,
\begin{enumerate}
\item
$\lp f_a\rp^N(x)-x$ has no repeated roots,
\item
any field in $\mathcal{F}(a)$ is linearly disjoint from the compositum of the others,
\item
if $\Sigma_{f_a,d}\in\mathcal{F}\lp a\rp$, then $\Gal{\lp\Sigma_{f_a,d}/\Q\rp}\simeq C_d \wr S_{r_k(d)}$, and
\item
if $\Sigma_{f_a,d}\in\mathcal{F}\lp a\rp$, then the action of $\Gal{\lp\Sigma_{f_a,d}/\Q\rp}$ on the points of period $d$ of $\lp\Sigma_{f_a,d},f\rp$ matches the action of $C_d\wr S_{r_k(d)}$ on $B(d,r_k(d))$.
\end{enumerate}

Fix any $a\in\Z\setminus\mathcal{A}_{k,\mathcal{N}}$.
To ease notation, for any $i\in\mathcal{N}$, we will write $\Phi_i$ for $\Phi_{f_a,i}$.
By \hyperref[rootsandcycles]{Proposition~\ref*{rootsandcycles}} and (1), note that
\begin{align*}
\delta\lp\lb p\in\mathcal{P}\mid\text{for all $i\in\mathcal{N}$, }\lp\F_p,\lc x^k+a\rc_p\rp\text{ has precisely $\varphi(i)$ $i$-cycles}\rb\rp\hspace{-200px}\\
&=\delta\lp\bigcap_{i\in\mathcal{N}}{\lb p\in\mathcal{P}\mid\lp\F_p,\lc x^k+a\rc_p\rp\text{ has precisely $\varphi(i)$ $i$-cycles}\rb}\rp\\
&=\delta\lp\bigcap_{i\in\mathcal{N}}{\lb p\in\mathcal{P}\mid\lc\Phi_i\rc_p\text{ has precisely $i\cdot \varphi(i)$ roots in }\F_p\rb}\rp.
\end{align*}
Since $\mathcal{N}\subseteq\mathcal{N}_0$ by construction, properties (1)--(4) allow us to apply the \ref{FDT} (see \hyperref[FDTremark]{Remark~\ref*{FDTremark}}) to conclude that
\[
\delta\lp\bigcap_{i\in\mathcal{N}}{\lb p\in\mathcal{P}\mid\lc\Phi_i\rc_p\text{ has precisely $i\cdot \varphi(i)$ roots in }\F_p\rb}\rp
=\prod_{i\in\mathcal{N}}{{}_k\omega_i(\varphi(i))}.
\]
\end{proof}
\noindent At this point, \hyperref[finalmomentscor]{Corollary~\ref{finalmomentscor}} is immediate from \hyperref[bigtheorem]{Theorem~\ref*{bigtheorem}} and \hyperref[momentswecareabout]{Corollary~\ref*{momentswecareabout}}.


To analyze the cycle structure of \emph{all} monic binomial unicritical integral polynomials of a fixed degree $k$, we now introduce an asymptotic density on the set of such polynomials.
Let $k\in\Z_{\geq2}$, let $\mathcal{N}$ be a finite subset of $\Z_{\geq1}$, and choose any $\varphi\colon\mathcal{N}\to{\Z_{\geq0}}$.
Define the \emph{truncated cycle density} on $\lb x^k+a\mid a\in\Z\rb$ to be the function ${_k}{\delta}_{\mathcal{N},\varphi}\colon\Z_{\geq0}\to\R_{\geq0}$ given by
\[
{}_k{\delta}_{\mathcal{N},\varphi}(X)
=\frac{1}{2X+1}\sum_{a=-X}^X
{{\delta}\lp\lb p\in\mathcal{P}\mid\text{for all $i\in \mathcal{N}$, $\lp\F_p,\lc x^k+a\rc_p\rp$ has precisely $\varphi(i)$ $i$-cycles}\rb\rp}.
\]
Next, define the \emph{cycle density} on $\lb x^k+a\mid a\in\Z\rb$ to be the function ${}_k\delta_\mathcal{N}\colon{\Z_{\geq0}}^\mathcal{N}\to\R_{\geq0}$ given by
\[
{}_k\delta_\mathcal{N}(\varphi)
=\lim_{X\to\infty}
{}_k{\delta}_{\mathcal{N},\varphi}(X)
\]
(we will show in \cref{cycledistribution} that these limit exists).

\begin{cycledistribution}\label{cycledistribution}
Let $k\in\Z_{\geq2}$, let $\mathcal{N}$ be a finite subset of $\Z_{\geq1}$, and choose any $\varphi\in{\Z_{\geq0}}^\mathcal{N}$.
Then
\[
\lv\prod_{i\in\mathcal{N}}{{}_k\omega_i(\varphi(i))}-{}_k{\delta}_{\mathcal{N},\varphi}(X)\rv
=O_{k,\mathcal{N}}\lp\frac{1}{\sqrt{X}}\rp.
\]
In particular,
\begin{itemize}
\item
the implied constants do not depend on $\varphi$ and
\item
$\displaystyle{
{}_k\delta_\mathcal{N}(\varphi)
=\prod_{i\in\mathcal{N}}{{}_k\omega_i(\varphi(i))}.}$
\end{itemize}
\end{cycledistribution}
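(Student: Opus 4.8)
The plan is to deduce this directly from \hyperref[bigtheorem]{Theorem~\ref*{bigtheorem}} together with the quantitative bound on the size of thin sets recorded in \hyperref[HITremark]{Remark~\ref*{HITremark}}. Write $P=\prod_{i\in\mathcal{N}}{{}_k\omega_i(\varphi(i))}$, and observe that $0\le P\le 1$ since each ${}_k\omega_i$ is a probability distribution. Let $\mathcal{A}=\mathcal{A}_{k,\mathcal{N}}$ be the thin set furnished by \hyperref[bigtheorem]{Theorem~\ref*{bigtheorem}}; the key point is that one such set works simultaneously for every $\varphi$. By \hyperref[HITremark]{Remark~\ref*{HITremark}} there is a constant $C=C_{k,\mathcal{N}}$, depending only on $\mathcal{A}$ and hence only on $k$ and $\mathcal{N}$, with $\lv\lb a\in\mathcal{A}\mid -X\le a\le X\rb\rv\le C\sqrt{X}$ for every $X$. (The inner densities appearing in the definition of ${}_k{\delta}_{\mathcal{N},\varphi}(X)$ exist for every $a$ by the Chebotarev density theorem, so the average is well defined for all $a$.)

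First I would split the defining sum for ${}_k{\delta}_{\mathcal{N},\varphi}(X)$ according to whether $a\in\mathcal{A}$. For each $a$ with $|a|\le X$ and $a\notin\mathcal{A}$, \hyperref[bigtheorem]{Theorem~\ref*{bigtheorem}}---applied to this very $\varphi$---says the corresponding summand equals exactly $P$, and there are $2X+1-\lv\lb a\in\mathcal{A}\mid |a|\le X\rb\rv$ such $a$. For each $a\in\mathcal{A}$ with $|a|\le X$, the summand is a natural density and so lies in $[0,1]$, and there are at most $C\sqrt{X}$ such $a$. Hence, setting $M=\lv\lb a\in\mathcal{A}\mid|a|\le X\rb\rv$,
\[
\lv(2X+1)\,{}_k{\delta}_{\mathcal{N},\varphi}(X)-(2X+1)P\rv
\le M\cdot(P+1)
\le 2C\sqrt{X},
\]
so $\lv{}_k{\delta}_{\mathcal{N},\varphi}(X)-P\rv\le 2C\sqrt{X}/(2X+1)=O_{k,\mathcal{N}}(1/\sqrt{X})$, with an implied constant visibly independent of $\varphi$. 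Letting $X\to\infty$ then shows the limit defining ${}_k\delta_\mathcal{N}(\varphi)$ exists and equals $P$, giving both displayed conclusions.

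I do not expect a serious obstacle here: granting \hyperref[bigtheorem]{Theorem~\ref*{bigtheorem}}, the whole content is that the exceptional set $\mathcal{A}$ is too thin to affect the Ces\`aro average, and the square-root bound on thin sets is exactly what upgrades ``$\mathcal{A}$ has density zero'' to the claimed $1/\sqrt{X}$ rate. The only two points meriting care are (i) that the thin set of \hyperref[bigtheorem]{Theorem~\ref*{bigtheorem}} may be taken uniform in $\varphi$---it may, since that theorem first fixes $\mathcal{A}_{k,\mathcal{N}}$ and only afterwards quantifies over all $\varphi$---and (ii) that the contribution of the $O(\sqrt{X})$ exceptional values of $a$ should be controlled by the trivial bound $0\le\delta(\,\cdot\,)\le 1$ rather than by trying to evaluate those densities.
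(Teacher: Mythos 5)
Your argument is correct and follows essentially the same route as the paper: invoke \hyperref[bigtheorem]{Theorem~\ref*{bigtheorem}} to get a single thin set $\mathcal{A}_{k,\mathcal{N}}$ (uniform in $\varphi$), use the $O(\sqrt{X})$ bound of \hyperref[HITremark]{Remark~\ref*{HITremark}} on its truncations, split the averaging sum accordingly, and bound the exceptional terms trivially by $1$. Your bookkeeping (error at most $2C\sqrt{X}/(2X+1)$, hence $O_{k,\mathcal{N}}(1/\sqrt{X})$ uniformly in $\varphi$, and the limit statement follows) matches the paper's computation.
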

\begin{proof}
By \cref{bigtheorem}, there exists a thin set $\mathcal{A}_{k,\mathcal{N}}$ with the property that for any $a\in\Z\setminus\mathcal{A}_{k,\mathcal{N}}$,
\[
\delta\lp\lb p\in\mathcal{P}\mid\text{for all $i\in \mathcal{N}$, $\lp\F_p,\lc x^k+a\rc_p\rp$ has precisely $\varphi(i)$ $i$-cycles}\rb\rp=\prod_{i\in\mathcal{N}}{{}_k\omega_i(\varphi(i))}.
\]
Now, by \cref{HITremark} we know that
\[
\lv\lb a\in\mathcal{A}_{k,\mathcal{N}}\mid -X\leq a\leq X\rb\rv=O\lp\sqrt{X}\rp;
\]
thus,
\begin{align*}
\lv\prod_{i\in\mathcal{N}}{{}_k\omega_i(\varphi(i))}-{}_k{\delta}_{\mathcal{N},\varphi}(X)\rv\hspace{-100px}\\
&\leq
\lv\prod_{i\in\mathcal{N}}{{}_k\omega_i(\varphi(i))}-
\frac{\lv\lb a\in\Z\setminus\mathcal{A}_{k,\mathcal{N}}\mid-X\leq a\leq X\rb\rv\cdot\prod_{i\in\mathcal{N}}{{}_k\omega_i(\varphi(i))}}{2X+1}\rv
+\sum_{\substack{a\in\mathcal{A}_{k,\mathcal{N}}\\-X\leq a\leq X}}{\frac{1}{2X+1}}\\
&=\prod_{i\in\mathcal{N}}{{}_k\omega_i(\varphi(i))}\cdot\frac{2X+1-\lv\lb a\in\Z\setminus\mathcal{A}_{k,\mathcal{N}}\mid-X\leq a\leq X\rb\rv}{2X+1}
+\sum_{\substack{a\in\mathcal{A}_{k,\mathcal{N}}\\-X\leq a\leq X}}{\frac{1}{2X+1}}\\
&=O_{k,\mathcal{N}}\lp\frac{1}{\sqrt{X}}\rp,
\end{align*}
as desired.
\end{proof}

%

\subsection{Most unicritical polynomials have many cycles}
\label{puttingitalltogether}

We now prove \hyperref[recursion]{Lemma~\ref*{recursion}}, which computes asymptotics for a certain family of recursively defined sequences; it will be useful for proving \hyperref[thm:final]{Theorem~\ref*{thm:final}}.

\begin{recursion}\label{recursion}
For all $j\in\Z_{\geq0}$, let $\lp t_{j,n}\rp_{n\in\Z_{\geq1}}$ be a sequence of nonnegative real numbers.
For all such $j$, let $\lp s_{j,n}\rp_{n\in\Z_{\geq1}}$ be the sequence defined recursively by
\[
s_{j,n}=
\begin{cases}
t_{j,1}&\text{if $n=1$,}\\
\sum_{i=0}^j{s_{i,n-1}t_{j-i,n}}&\text{otherwise.}
\end{cases}
\]
If
\[
\text{for all }j\in\Z_{\geq0},
\hspace{15px}
t_{j,n}=\frac{1}{j!n^j}\lp1-\frac{1}{n}\rp+O\lp\frac{1}{n^{j+2}}\rp,
\]
then
\[
\text{for all }j\in\Z_{\geq0},
\hspace{15px}
s_{j,n}=O\lp n^{-\frac{1}{j+1}}\rp.
\]
\end{recursion}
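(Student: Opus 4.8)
The plan is to prove the estimate by strong induction on $j$. The key observation is that splitting off the $i=j$ term in the defining recursion gives, for $n\ge 2$,
\[
s_{j,n}=t_{0,n}\,s_{j,n-1}+R_n,\qquad R_n:=\sum_{i=0}^{j-1}s_{i,n-1}\,t_{j-i,n},
\]
so that $R_n$ involves only the sequences $s_{i,\cdot}$ with $i<j$; the recursion is thus ``triangular'' in $j$, and the induction on $j$ reduces to solving, at each stage, a linear first-order recursion in $n$. (Equivalently, the generating functions $S_n(x)=\sum_{\ell\ge 0}s_{\ell,n}x^\ell$ and $T_n(x)=\sum_{\ell\ge 0}t_{\ell,n}x^\ell$ satisfy $S_n=S_{n-1}T_n$, hence $S_n=\prod_{m\le n}T_m$; I will not need this.) The only analytic facts used from the hypothesis are $t_{0,n}=1-\tfrac1n+O(\tfrac1{n^2})$ and $t_{\ell,n}=O(n^{-\ell})$ for each fixed $\ell\ge 1$, both immediate from the assumed form of $t_{\ell,n}$.

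For the base case $j=0$ we have $s_{0,n}=\prod_{m=1}^n t_{0,m}$. Fix $n_0$ large enough that $t_{0,m}>\tfrac12$ for all $m\ge n_0$ (possible since $t_{0,m}\to 1$); then $\log t_{0,m}=-\tfrac1m+O(\tfrac1{m^2})$ for $m\ge n_0$, so $\sum_{m=n_0}^n\log t_{0,m}=-\log n+O(1)$ and $\prod_{m=n_0}^n t_{0,m}=\Theta(1/n)$. Multiplying by the fixed (and possibly zero) constant $\prod_{m<n_0}t_{0,m}$ yields $s_{0,n}=O(1/n)=O(n^{-1/(0+1)})$.

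For the inductive step, fix $j\ge 1$ and assume $s_{i,n}=O(n^{-1/(i+1)})$ for all $i<j$. Using $t_{j-i,n}=O(n^{-(j-i)})$ (note $j-i\ge 1$), each summand of $R_n$ is $O\big(n^{-1/(i+1)-(j-i)}\big)$; the exponent $\tfrac1{i+1}+(j-i)$ is decreasing in $i$, so over $i\in\{0,\dots,j-1\}$ it is smallest at $i=j-1$, where it equals $1+\tfrac1j$, and therefore $R_n=O\big(n^{-(1+1/j)}\big)$. Now solve the recursion by variation of parameters: with $P_n:=\prod_{m=n_0+1}^n t_{0,m}$ and $P_{n_0}:=1$, so that $P_n=t_{0,n}P_{n-1}$ and, exactly as in the base case, $P_n=\Theta(1/n)$, dividing the recursion by $P_n$ and telescoping gives, for $n\ge n_0$,
\[
s_{j,n}=P_n\Big(s_{j,n_0}+\sum_{m=n_0+1}^n\frac{R_m}{P_m}\Big).
\]
Since $R_m/P_m=O\big(m\cdot m^{-(1+1/j)}\big)=O(m^{-1/j})$, the sum is $O(n^{1-1/j})$ when $j\ge 2$ and $O(\log n)$ when $j=1$; multiplying by $P_n=O(1/n)$ gives $s_{j,n}=O(n^{-1/j})$, respectively $s_{j,n}=O(n^{-1}\log n)$. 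Either way this lies in $O(n^{-1/(j+1)})$, since $\tfrac1j>\tfrac1{j+1}$ in the first case and $n^{-1}\log n=o(n^{-1/2})$ in the second, which closes the induction. (The same argument run with the stronger hypothesis $s_{i,n}=O(n^{-1}(\log n)^i)$ — whose base case $s_{0,n}=O(1/n)$ has just been proved — in fact yields $s_{j,n}=O(n^{-1}(\log n)^j)$, which is essentially sharp; but only the weaker stated bound is needed downstream.)

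The proof is essentially bookkeeping, and I expect the one genuinely delicate point to be that the homogeneous factor $P_n$ can be estimated only from a large enough index onward: the initial factors $t_{0,m}$ carry no asymptotic information and, in the generality of the lemma, may even vanish, so the variation-of-parameters sum must begin at $n_0$ rather than at $1$, with the finitely many earlier terms absorbed into constants. Everything else — the bound on $R_n$, the harmonic and power sums, the routine replacement of $n-1$ by $n$ inside $O$-terms, and the inclusions $O(n^{-1/j}),\,O(n^{-1}\log n)\subseteq O(n^{-1/(j+1)})$ — is straightforward, and the nonnegativity of all the $t_{\ell,m}$ and $s_{i,m}$ keeps the $O$-estimates clean throughout.
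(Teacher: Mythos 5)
Your proof is correct, and its skeleton coincides with the paper's: both induct on $j$, split the recursion as $s_{j,n}=t_{0,n}s_{j,n-1}+R_n$, and bound the inhomogeneous term by $R_n=O\lp n^{-1-1/j}\rp$ using the same monotonicity of the exponent $\tfrac{1}{i+1}+(j-i)$. Where you genuinely differ is in closing the resulting first-order recursion in $n$. The paper never solves it: it multiplies by the weight $n^{1/(j+1)}$ and, using only the \emph{upper} bound $t_{0,n}<1-\tfrac1n+\tfrac{R}{n^2}$ together with the elementary inequality $(n-1)n^{1/(j+1)}+1<n(n-1)^{1/(j+1)}$ for large $n$, shows that $n^{1/(j+1)}s_{j,n}$ grows by at most a summable amount $Rn^{-1-1/(j(j+1))}$ at each step, hence is bounded; its base case is likewise a bare product manipulation, with no logarithms and no lower bound on $t_{0,n}$. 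You instead solve the recursion by variation of parameters with $P_n=\prod_{m>n_0}t_{0,m}$, which requires the two-sided estimate $P_n=\Theta(1/n)$, in particular the nonvanishing of and a lower bound on $t_{0,m}$ for large $m$. That is legitimate here because the hypothesis $t_{0,n}=1-\tfrac1n+O(1/n^2)$ is two-sided, and you rightly flag the possible vanishing of early factors as the delicate point; still, it makes your argument slightly less robust than the paper's, which would survive with only one-sided upper bounds on the $t_{j,n}$. In exchange, your method buys a sharper conclusion: $s_{j,n}=O\lp n^{-1/j}\rp$ for $j\ge2$ and $O\lp n^{-1}\log n\rp$ for $j=1$ (and $O\lp n^{-1}(\log n)^j\rp$ under the strengthened induction), all of which imply the stated $O\lp n^{-1/(j+1)}\rp$.
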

\begin{proof}
We will induct on $j$.
For the 
$j=0$ case, we begin by noting that $s_{0,n}=\prod_{i=1}^n{t_{0,i}}$ for all $n\in\Z_{\geq1}$.
Choose any $R\in\R_{>0}$ such that for all $n\in\Z_{\geq1}$,
\[
t_{0,n}<1-\frac{1}{n}+\frac{R}{n^2}.
\]
Then for all such $n$,
\[
0\leq ns_{0,n}
<n\prod_{i=1}^n{\lp1-\frac{1}{i}+\frac{R}{i^2}\rp}
=R\prod_{i=2}^n{\lp\frac{i-1+\frac{R}{i}}{i-1}\rp}
=R\prod_{i=2}^n{\lp1+\frac{R}{i(i-1)}\rp}.
\]
Since $\prod_{i=2}^\infty {\lp1+\frac{R}{i(i-1)}\rp}$ converges, we see $s_{0,n}=O\lp\frac{1}{n}\rp$, as desired.

For the induction step, suppose that $j\in\Z_{\geq1}$ and
\[
s_{j^\prime,n}=O\lp n^{-\frac{1}{j^\prime+1}}\rp
\hspace{10px}
\text{for all }j^\prime\in\lb0,\ldots,j-1\rb.
\]
By the induction hypothesis, we see that
\[
\sum_{i=0}^{j-1}{s_{i,n-1}t_{j-i,n}}=O\lp n^{-1-\frac{1}{j}}\rp.
\]
Choose any $R\in\R_{>0}$ such that
\begin{itemize}
\item
$\sum_{i=0}^{j-1}{s_{i,n-1}t_{j-i,n}}<Rn^{-1-\frac{1}{j}}$ for all $n\in\Z_{\geq1}$ and
\item
$t_{0,n}<1-\frac{1}{n}+\frac{R}{n^2}$ for all $n\in\Z_{\geq1}$.
\end{itemize}
Note that since
\[
\lp(n-1)n^{\frac{1}{j+1}}+1\rp^{j+1}
=n^{j+2}-(j+1)n^{j+1}+O\lp n^{j+\frac{j}{j+1}}\rp,
\]
there is some $M\in\Z_{\geq1}$ such that
\[
(n-1)n^{\frac{1}{j+1}}+1<n(n-1)^{\frac{1}{j+1}}
\]
for all $n\in\Z_{\geq M}$.
Now choose any $N\in\Z_{\geq1}$ such that $N>\max{\lp\lb R^{\frac{j+1}{j}},M\rb\rp}$, and note that for any $n\in\Z_{\geq N}$,
\begin{align*}
n^{\frac{1}{j+1}}s_{j,n}
&<\frac{R}{n^{1+\frac{1}{j}-\frac{1}{j+1}}}
+s_{j,n-1}\lp\frac{n-1}{n^{1-\frac{1}{j+1}}}+\frac{R}{n^{2-\frac{1}{j+1}}}\rp&&\text{by choice of }R\\
&<\frac{R}{n^{1+\frac{1}{j(j+1)}}}
+(n-1)^{\frac{1}{j+1}}s_{j,n-1}\lp\lp\frac{n-1}{n}\rp^{1-\frac{1}{j+1}}+\frac{1}{n(n-1)^{\frac{1}{j+1}}}\rp&&\text{since }n>R^\frac{j+1}{j}\\
&<\frac{R}{n^{1+\frac{1}{j(j+1)}}}+(n-1)^{\frac{1}{j+1}}s_{j,n-1}&&\text{since }n>M.
\end{align*}
Finally, since $\sum_{n=1}^\infty{n^{-1-\frac{1}{j(j+1)}}}<\infty$, we see that $n^{\frac{1}{j+1}}s_{j,n}$ is bounded, as desired.
\end{proof}

Before continuing, we pause to introduce a bit of notation.
For any dynamical system $(S,f)$ and positive integer $n$, let $\mathbf{C}_n(S,f)=\lv\lb\text{$n$-cycles in $(S,f)$}\rb\rv$.
We now use \hyperref[bigtheorem]{Theorem~\ref*{bigtheorem}}, along with \hyperref[recursion]{Lemma~\ref*{recursion}} and the bounds computed in \hyperref[probbounds]{Section~\ref*{probbounds}} (in particular, \hyperref[fixedpointprobcor]{Corollary~\ref*{fixedpointprobcor}}) to prove \hyperref[thm:final]{Theorem~\ref*{thm:final}}.

\begin{thm}
\label{thm:final}
If $k\in\Z_{\geq2}$ and $J\in\Z_{\geq1}$, then for any $\epsilon\in\R_{>0}$ there exists a thin set $\mathcal{A}_{k,J,\epsilon}\subseteq\Z$ such that for all $a\in\Z\setminus\mathcal{A}_{k,J,\epsilon}$,
\[
\overline{\delta}\lp\lb p\in\mathcal{P}
\mid\lp\F_p,\lc x^k+a\rc_p\rp\text{ has }J\text{ or fewer cycles}\rb\rp
<\epsilon.
\]
\end{thm}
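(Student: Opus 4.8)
The plan is to reduce the statement about the \emph{total} number of cycles to a statement about the number of \emph{short} cycles, for which \cref{bigtheorem} and \cref{recursion} are tailor-made. First I would observe that if $(\F_p,[x^k+a]_p)$ has $J$ or fewer cycles in all, then in particular it has $J$ or fewer cycles of length at most $n$, for every $n\in\Z_{\geq1}$; hence
\[
\{p\in\mathcal{P}\mid(\F_p,[x^k+a]_p)\text{ has }J\text{ or fewer cycles}\}\subseteq\{p\in\mathcal{P}\mid(\F_p,[x^k+a]_p)\text{ has }J\text{ or fewer cycles of length at most }n\}.
\]
Consequently the upper density on the left is bounded above by the density on the right, provided the latter exists. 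It therefore suffices to choose $n$ (depending on $k$, $J$, $\epsilon$) large enough that this density is $<\epsilon$ for every $a$ outside a suitable thin set.

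Next I would fix such an $n$ and apply \cref{bigtheorem} with $\mathcal{N}=\{1,\dots,n\}$, obtaining a thin set $\mathcal{A}_{k,\mathcal{N}}$ such that for every $a\in\Z\setminus\mathcal{A}_{k,\mathcal{N}}$ and every $\varphi\colon\{1,\dots,n\}\to\Z_{\geq0}$, the density of primes $p$ for which $(\F_p,[x^k+a]_p)$ has exactly $\varphi(i)$ $i$-cycles for each $i\in\mathcal{N}$ equals $\prod_{i=1}^n{}_k\omega_i(\varphi(i))$. The event ``$(\F_p,[x^k+a]_p)$ has exactly $j$ cycles of length at most $n$'' is the finite disjoint union of these events over all $\varphi$ with $\sum_{i=1}^n\varphi(i)=j$, so its density exists and equals $\sum_{\varphi\colon\sum\varphi(i)=j}\prod_{i=1}^n{}_k\omega_i(\varphi(i))$. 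Setting $t_{j,n}:={}_k\omega_n(j)$, a straightforward induction on $n$ using the convolution identity $\sum_{\varphi\colon\sum_{i\leq n}\varphi(i)=j}\prod_{i=1}^n{}_k\omega_i(\varphi(i))=\sum_{i=0}^j\big(\sum_{\varphi\colon\sum_{i'\leq n-1}\varphi(i')=i}\prod_{i'=1}^{n-1}{}_k\omega_{i'}(\varphi(i'))\big)\,{}_k\omega_n(j-i)$ identifies this quantity with $s_{j,n}$, where $(s_{j,n})$ is the recursively defined sequence of \cref{recursion}. Hence the density of primes with $J$ or fewer cycles of length at most $n$ exists and equals $\sum_{j=0}^J s_{j,n}$.

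Finally I would bring in the estimates of \cref{probbounds}: by \cref{fixedpointprobcor} we have $t_{j,n}={}_k\omega_n(j)=\frac{1}{j!n^j}(1-\frac1n)+O(\frac{1}{n^{j+2}})$ for each $j$, which is exactly the hypothesis of \cref{recursion}. The lemma then yields $s_{j,n}=O(n^{-1/(j+1)})$ for every fixed $j$, so $\sum_{j=0}^J s_{j,n}\to0$ as $n\to\infty$. Choosing $n=n(k,J,\epsilon)$ with $\sum_{j=0}^J s_{j,n}<\epsilon$ and setting $\mathcal{A}_{k,J,\epsilon}:=\mathcal{A}_{k,\{1,\dots,n\}}$, the displayed containment from the first step gives, for all $a\in\Z\setminus\mathcal{A}_{k,J,\epsilon}$,
\[
\overline{\delta}\big(\{p\in\mathcal{P}\mid(\F_p,[x^k+a]_p)\text{ has }J\text{ or fewer cycles}\}\big)\leq\sum_{j=0}^J s_{j,n}<\epsilon,
\]
as desired.

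I expect the main obstacle to be bookkeeping rather than genuine difficulty: one must carefully justify that, under the prime-density measure and for $a$ outside the thin set, the ``number of cycles of length at most $n$'' statistic is distributed as the convolution of the individual ``number of $i$-cycles'' statistics (this is where the full independence in \cref{bigtheorem}, and hence the linear disjointness in Morton's Theorem and the Frobenius Density Theorem, is used), and that the recursion in \cref{recursion} genuinely computes that convolution. All of the analytic content—that these convolutions decay to zero—is already packaged inside \cref{recursion} together with the asymptotic in \cref{fixedpointprobcor}, so no further estimation is needed.
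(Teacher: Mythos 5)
Your proposal is correct and follows essentially the same route as the paper: reduce to cycles of length at most $n$, apply \cref{bigtheorem} with $\mathcal{N}=\{1,\dots,n\}$, identify the resulting density with the convolution quantities $s_{j,n}$ of \cref{recursion}, and use \cref{fixedpointprobcor} together with that lemma to choose $n$ so that $\sum_{j=0}^{J}s_{j,n}<\epsilon$. The only cosmetic difference is that the paper fixes $N$ first and then invokes \cref{bigtheorem}, but since the $s_{j,n}$ are independent of $a$ this ordering is immaterial.
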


\begin{proof}

For any $j\in\Z_{\geq0}$ and $n\in\Z_{\geq1}$, let
\begin{itemize}
\item
$\mathcal{J}_{j,n}=\lb \varphi\in {\Z_{\geq0}}^{[n]}\mid\sum_{i=1}^n{\varphi(i)}=j\rb$,
\item
$t_{j,n}={}_k\omega_n(j)$, and
\item
$s_{j,n}=\sum_{\varphi\in\mathcal{J}_{j,n}}
{\prod_{i\in[n]}{{}_k\omega_i\lp \varphi(i)\rp}}$.
\end{itemize}

To see how these quantities are interrelated, first note that for all $j\in\Z_{\geq0}$, they imply that $s_{j,1}=t_{j,1}$; moreover, if $n\in\Z_{\geq2}$, then
\[
s_{j,n}
=\sum_{\varphi\in\mathcal{J}_{j,n}}{\prod_{i\in[n]}{{}_k\omega_i(\varphi(i))}}
=\sum_{m=0}^j
{\sum_{\varphi\in\mathcal{J}_{m,n-1}}
{\lp{}_k\omega_n(j-m)\prod_{i\in[n-1]}{{}_k\omega_i(\varphi(i))}\rp}}
=\sum_{m=0}^j{s_{m,n-1}t_{j-m,n}}.
\]
Now, by \hyperref[fixedpointprobcor]{Corollary~\ref*{fixedpointprobcor}}, we know that for $j\in\Z_{\geq0}$,
\[
t_{j,n}=\frac{1}{j!n^j}\lp1-\frac{1}{n}\rp+O\lp\frac{1}{n^{j+2}}\rp.
\]
Thus, by \hyperref[recursion]{Lemma~\ref*{recursion}}, there exists $N$ in $\Z_{\geq1}$ such that $\sum_{j=0}^J{s_{j,N}}<\epsilon$.
Set $\mathcal{N}=\lc N\rc$; then by \hyperref[bigtheorem]{Theorem~\ref*{bigtheorem}}, there is a thin set $\mathcal{A}=\mathcal{A}_{k,J,\epsilon}\subseteq\Z$ such that for all $a\in\Z\setminus\mathcal{A}$ and any function $\varphi\colon\mathcal{N}\to\Z_{\geq0}$,
\[
\delta\lp\lb p\in\mathcal{P}\mid\text{for all $i\in \mathcal{N}$, $\lp\F_p,\lc x^k+a\rc_p\rp$ has precisely $\varphi(i)$ $i$-cycles}\rb\rp=\prod_{i\in\mathcal{N}}{{}_k\omega_i(\varphi(i))}.
\]
For any such $a$, note that
\begin{align*}
\overline{\delta}\lp\lb p\in\mathcal{P}\,\Big\vert\,
\lp\F_p,\lc x^k+a\rc_p\rp\text{ has $J$ or fewer cycles}\rb\rp\hspace{-200px}\\
&=\overline{\delta}\lp\lb p\in\mathcal{P}\,\bigg\vert\,
\sum_{n=1}^\infty{\mathbf{C}_n\lp\F_p,\lc x^k+a\rc_p\rp}\leq J\rb\rp\\
&\leq\overline{\delta}\lp\lb p\in\mathcal{P}\,\bigg\vert\,
\sum_{n=1}^N{\mathbf{C}_n\lp\F_p,\lc x^k+a\rc_p\rp}\leq J\rb\rp\\
&=\sum_{j=0}^J{\delta\lp\lb p\in\mathcal{P}\,\Big\vert\,
\lp\F_p,\lc x^k+a\rc_p\rp\text{ has precisely $j$ cycles of length at most $N$}\rb\rp}\\
&=\sum_{j=0}^J{\delta\lp\bigcup_{\varphi\in\mathcal{J}_{j,N}}{\lb p\in\mathcal{P}\,\Big\vert\,\text{for all $i\in\mathcal{N}$, }
\lp\F_p,\lc x^k+a\rc_p\rp\text{ has precisely $\varphi(i)$ $i$-cycles}\rb}\rp}\\
&=\sum_{j=0}^J{\sum_{\varphi\in\mathcal{J}_{j,N}}{\delta\lp\lb p\in\mathcal{P}\,\Big\vert\,\text{for all $i\in\mathcal{N}$, }
\lp\F_p,\lc x^k+a\rc_p\rp\text{ has precisely $\varphi(i)$ $i$-cycles}\rb\rp}}\\
&=\sum_{j=0}^J{\sum_{\varphi\in\mathcal{J}_{j,N}}
{\prod_{i\in\mathcal{N}}{{}_k\omega_i\lp \varphi(i)\rp}}},
\end{align*}
so we are done by our choice of $N$.
\end{proof}

As in \hyperref[cyclestructuremoments]{Section~\ref*{cyclestructuremoments}}, we can show that \emph{all} monic binomial unicritical polynomials of a fixed degree $k$ have few cycles, on average, once we introduce the appropriate density functions.
For $k\in\Z_{\geq2}$, $n\in\Z_{\geq1}$, and $J,X\in\Z_{\geq0}$, define
\[
{}_k{\delta}_{n,J}(X)
=\frac{1}{2X+1}\sum_{a=-X}^X
{{\delta}\lp\lb p\in\mathcal{P}\,\bigg\vert\,
\sum_{i=1}^n{\mathbf{C}_i\lp\F_p,\lc x^k+a\rc_p\rp}\leq J\rb\rp}.
\]
\begin{oldbox}\label{oldbox}
Note that for any such $k,n,J,X$,
\[
{}_k{\delta}_{n,J}(X)=\sum_{\substack{\varphi\in\lp\Z_{\geq0}\rp^{[n]}\\\varphi(1)+\cdots+\varphi(n)\leq J}}{{}_k\delta_{[n],\varphi}(X)}.
\]
\end{oldbox}
\noindent As in \hyperref[cyclestructuremoments]{Section~\ref*{cyclestructuremoments}}, let
\[
{}_k\delta_n(J)
=\lim_{X\to\infty}
{}_k{\delta}_{n,J}(X)
\]
(this limit exists by \cref{cycledistribution}.)

\begin{disconnectedbox}\label{disconnectedbox}
If $k\in\Z_{\geq2}$ and $J\in\Z_{\geq1}$, then for any $\epsilon\in\R_{>0}$, there exist $C\in\R_{>0}$ and $N\in\Z_{\geq1}$ such that for all $n\in\Z_{\geq N}$ and $X\in\Z_{\geq1}$,
\[
{}_k\delta_{n,J}(X)
<\epsilon+\frac{C}{\sqrt{X}}.
\]
In particular, for all such $n$,
\[
{}_k\delta_n(J)
\leq\epsilon.
\]
\end{disconnectedbox}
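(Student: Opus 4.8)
The plan is to bootstrap from the estimates already obtained in the proof of \cref{thm:final}, adding one new structural observation that keeps the constant $C$ uniform in $n$: for fixed $k$, $J$, and $X$, the quantity ${}_k\delta_{n,J}(X)$ is non-increasing in $n$. Indeed, for $n\ge n'$ the condition $\sum_{i=1}^n\mathbf{C}_i\lp\F_p,\lc x^k+a\rc_p\rp\le J$ implies $\sum_{i=1}^{n'}\mathbf{C}_i\lp\F_p,\lc x^k+a\rc_p\rp\le J$, so the relevant set of primes only shrinks, hence so does its density, hence so does its average over $a$. It therefore suffices to prove the displayed bound for a single well-chosen value $n=N$ and then invoke monotonicity for all $n\ge N$.

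To choose $N$, I would reuse, from the proof of \cref{thm:final}, the sequences $t_{j,n}={}_k\omega_n(j)$, $s_{j,n}=\sum_{\varphi\in\mathcal{J}_{j,n}}\prod_{i\in[n]}{}_k\omega_i(\varphi(i))$, and the sets $\mathcal{J}_{j,n}$, noting that the $s_{j,n}$ satisfy the recursion of \cref{recursion}. By \cref{fixedpointprobcor} the $t_{j,n}$ have exactly the asymptotic form required by \cref{recursion}, so $s_{j,n}=O\lp n^{-1/(j+1)}\rp$ for every $j$, whence $\sum_{j=0}^J s_{j,n}=O\lp n^{-1/(J+1)}\rp\to0$ as $n\to\infty$. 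I would fix $N\in\Z_{\geq1}$ with $\sum_{j=0}^J s_{j,N}<\epsilon$ and set $\mathcal{N}=[N]$.

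It then remains to estimate ${}_k\delta_{N,J}(X)$ directly. Writing the set $\lb p\in\mathcal{P}\mid\sum_{i=1}^N\mathbf{C}_i\lp\F_p,\lc x^k+a\rc_p\rp\le J\rb$ as a disjoint union over $j\in\lb0,\ldots,J\rb$, and then over $\varphi\in\mathcal{J}_{j,N}$, of the sets appearing in \cref{bigtheorem} (as in \cref{oldbox}), the equality in \cref{bigtheorem} gives, for every $a$ outside the thin set $\mathcal{A}_{k,[N]}$,
\[
\delta\lp\lb p\in\mathcal{P}\mid\sum_{i=1}^N\mathbf{C}_i\lp\F_p,\lc x^k+a\rc_p\rp\le J\rb\rp=\sum_{j=0}^J s_{j,N}.
\]
Splitting the average defining ${}_k\delta_{N,J}(X)$ into its contributions from $a\in\Z\setminus\mathcal{A}_{k,[N]}$ and from $a\in\mathcal{A}_{k,[N]}$, the former is bounded above by $\sum_{j=0}^J s_{j,N}<\epsilon$, while the latter is at most $\lv\lb a\in\mathcal{A}_{k,[N]}\mid -X\le a\le X\rb\rv/(2X+1)\le C_{\mathcal{A}_{k,[N]}}\sqrt{X}/(2X+1)<C_{\mathcal{A}_{k,[N]}}/\sqrt{X}$ by the thin-set size bound of \cref{HITremark}. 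Setting $C=C_{\mathcal{A}_{k,[N]}}$ yields ${}_k\delta_{N,J}(X)<\epsilon+C/\sqrt{X}$; monotonicity promotes this to all $n\ge N$, and letting $X\to\infty$ (the limit ${}_k\delta_n(J)$ existing by \cref{cycledistribution}) gives ${}_k\delta_n(J)\le\epsilon$.

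The one genuinely new ingredient is the monotonicity observation of the first paragraph, and that is where I expect the only subtlety to lie: without it the thin-set constant $C_{\mathcal{A}_{k,[N]}}$ would vary with $n$ and the \emph{uniform} $C$ demanded by the statement would be out of reach. The rest is bookkeeping built from \cref{bigtheorem}, \cref{recursion}, and \cref{HITremark}; the point to verify carefully is the order of quantifiers, namely that $N$ is determined by $k$, $J$, $\epsilon$ alone, after which $\mathcal{A}_{k,[N]}$ and $C=C_{\mathcal{A}_{k,[N]}}$ are fixed, so that $C$ and $N$ depend only on $k$, $J$, $\epsilon$, as required.
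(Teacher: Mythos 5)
Your proof is correct, and its skeleton is the same as the paper's: choose $N$ via \cref{recursion} and \cref{fixedpointprobcor} so that $\sum_{j\le J}s_{j,N}<\epsilon$, then decompose ${}_k\delta_{N,J}(X)$ as in \cref{oldbox}, use \cref{bigtheorem} for $a$ outside the thin set and the bound of \cref{HITremark} for $a$ inside it. Where you genuinely diverge is in how the bound is propagated to all $n\ge N$: the paper simply applies \cref{cycledistribution} and \cref{oldbox} for every such $n$, so the $O(1/\sqrt{X})$ it quotes carries an implied constant $O_{k,[n]}$ that nominally depends on $n$ (through the thin set $\mathcal{A}_{k,[n]}$), whereas the statement asks for a single $C$ valid for all $n\ge N$. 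Your monotonicity observation --- that $\sum_{i=1}^{n}\mathbf{C}_i\le J$ implies $\sum_{i=1}^{n'}\mathbf{C}_i\le J$ for $n'\le n$, so ${}_k\delta_{n,J}(X)\le{}_k\delta_{N,J}(X)$ --- lets you prove the estimate once, at $n=N$, with the explicit constant $C=C_{\mathcal{A}_{k,[N]}}$, and this is exactly what makes the uniform $C$ in the statement literal rather than implicit. (The only caveat is that comparing $\delta$ of nested sets of primes requires these densities to exist for every $a$, including $a$ in the thin set; the paper's definition of ${}_k\delta_{n,J}(X)$ already presupposes this, and if one prefers not to, the same argument runs verbatim with upper densities.) One further small point in your favor: you correctly sum $j$ from $0$ to $J$, where the paper's proof writes $\sum_{j=1}^{J}$, evidently a typo since the $j=0$ term must be included.
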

\begin{proof}
Adopting the notation of \cref{thm:final}, we see by \cref{recursion} that there is some $N\in\Z_{>0}$ such that $\sum_{j=1}^J{s_{j,n}}<\epsilon$ for all $n\geq N$.
Thus, we apply \cref{cycledistribution} and \cref{oldbox} to note that for all such $n$,
\[
{}_k\delta_{n,J}(X)
=\sum_{j=1}^J{s_{j,n}}+O\lp\frac{1}{\sqrt{X}}\rp.
\]
\end{proof}

\begin{lotsofcycles}\label{lotsofcycles}
Suppose that $k\in\Z_{\geq2}$.
There is an increasing function $\gamma\colon\Z_{\geq1}\to\Z_{\geq1}$ such that
\[
\lim_{J\to\infty}
{{}_k\delta_{\gamma(J),J}\lp\gamma(J)\rp}
=0.
\]
\end{lotsofcycles}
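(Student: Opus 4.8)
The plan is to feed \cref{disconnectedbox} into a diagonal argument, choosing $\gamma(J)$ large enough (as a function of $J$) to kill both error terms appearing there. Fix $k\in\Z_{\geq2}$. For each $J\in\Z_{\geq1}$, apply \cref{disconnectedbox} with $\epsilon=1/J$: this yields a constant $C_J\in\R_{>0}$ and an integer $N_J\in\Z_{\geq1}$ (both depending on $k$ and $J$) such that for every $n\in\Z_{\geq N_J}$ and every $X\in\Z_{\geq1}$,
\[
{}_k\delta_{n,J}(X)<\frac{1}{J}+\frac{C_J}{\sqrt{X}}.
\]

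Next I would construct $\gamma$ recursively so that it is strictly increasing while simultaneously dominating both $N_J$ and $C_J^2J^2$. Concretely, set $\gamma(1)=\max\lp\lb N_1,\,\lceil C_1^2\rceil\rb\rp$ and, for $J\geq2$, set $\gamma(J)=\max\lp\lb\gamma(J-1)+1,\,N_J,\,\lceil C_J^2J^2\rceil\rb\rp$. Then $\gamma$ is a strictly increasing function $\Z_{\geq1}\to\Z_{\geq1}$, we have $\gamma(J)\geq N_J$, and $\sqrt{\gamma(J)}\geq C_JJ$, so $C_J/\sqrt{\gamma(J)}\leq 1/J$. Plugging $n=X=\gamma(J)$ into the displayed bound gives
\[
0\leq{}_k\delta_{\gamma(J),J}\lp\gamma(J)\rp<\frac{1}{J}+\frac{C_J}{\sqrt{\gamma(J)}}\leq\frac{2}{J},
\]
where the lower bound holds because ${}_k\delta_{\gamma(J),J}\lp\gamma(J)\rp$ is an average of natural densities and hence nonnegative. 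Letting $J\to\infty$ yields $\lim_{J\to\infty}{}_k\delta_{\gamma(J),J}\lp\gamma(J)\rp=0$, as claimed.

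There is essentially no obstacle here beyond the bookkeeping required to keep $\gamma$ increasing while absorbing the two $J$-dependent quantities $C_J$ and $N_J$ — all the analytic content has already been isolated in \cref{disconnectedbox} (which itself rests on \cref{recursion}, \cref{fixedpointprobcor}, and \cref{cycledistribution}). The one point that warrants a moment's care is precisely that $C_J$ and $N_J$ vary with $J$, so no single $\gamma$ works uniformly; the recursion above is exactly what is needed to make $\gamma(J)$ grow fast enough to outpace them.
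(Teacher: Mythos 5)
Your proof is correct and follows essentially the same route as the paper: both arguments simply feed \cref{disconnectedbox} into a recursive choice of an increasing $\gamma(J)$ large enough that the bound ${}_k\delta_{\gamma(J),J}\lp\gamma(J)\rp$ tends to $0$. Your version merely makes the bookkeeping (absorbing the $J$-dependent constants $C_J$ and $N_J$, yielding the bound $2/J$) explicit where the paper leaves it implicit.
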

\begin{proof}
Set $\gamma(1)=1$ and for any $J\in\Z_{\geq2}$, apply \cref{disconnectedbox} to choose $\gamma(J)\in\Z_{\geq1}$ such that
\begin{itemize}
\item
$\displaystyle{\gamma(J)>\gamma(J-1)}$ and
\item
$\displaystyle{{}_k\delta_{\gamma(J),J}\lp\gamma(J)\rp<\frac{1}{J}}$.
\end{itemize}
\end{proof}

%

It is a classical fact that the average number of cycles of a random discrete dynamical system grows logarithmically; indeed Kruskal~\cite{Kruskal} proved that
\[
\frac{1}{\lv[X]^{[X]}\rv}
\cdot\sum_{f\in[X]^{[X]}}
{\lv\lb\text{cycles in }(S,f)\rb\rv}
=\frac{1}{2}\log{X}+\lp\frac{\log{2}+E}{2}\rp+o(1),
\]
where $E=.5772\ldots$ is Euler's constant.
This fact leads us to make the following conjecture about the nature of the function $\gamma$ introduced in \hyperref[lotsofcycles]{Corollary~\ref*{lotsofcycles}}.

\begin{whatisgamma}\label{whatisgamma}
Suppose that $k\in\Z_{\geq2}$ and $\gamma\in\lp\Z_{\geq0}\rp^{\Z_{\geq0}}$.
\begin{itemize}
\item
If $J=o\lp\log{\gamma(J)}\rp$, then there exists an increasing function $\alpha\colon\Z_{\geq0}\to\Z_{\geq0}$ such that
\[
\lim_{J\to\infty}
{{}_k\delta_{\gamma(J),J}\lp\alpha(J)\rp}
=0.
\]
\item
If there is some $\epsilon\in\lp-\infty,1\rp$ such that $\log{\gamma(J)}=O\lp J^\epsilon\rp$, then there exists an increasing function $\alpha\colon\Z_{\geq0}\to\Z_{\geq0}$ such that
\[
\lim_{J\to\infty}
{{}_k\delta_{\gamma(J),J}\lp\alpha(J)\rp}
=1.
\]
\end{itemize}

\end{whatisgamma}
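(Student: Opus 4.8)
The plan is to reduce \cref{whatisgamma} to a concentration estimate for a sum of independent random variables. Fix $k\in\Z_{\geq 2}$, let $X_1,X_2,\dots$ be independent with $X_i$ distributed according to ${}_k\omega_i$, set $S_n=X_1+\cdots+X_n$, and write $\mathbf P$ for the corresponding probability. The numbers $s_{j,n}$ appearing in the proof of \cref{thm:final} are precisely $\mathbf P[S_n=j]$ --- they form the convolution of ${}_k\omega_1,\dots,{}_k\omega_n$ and sum to $1$ because each ${}_k\omega_i$ is a probability distribution. Combining \cref{oldbox} with \cref{cycledistribution} (and summing the latter over the finitely many $\varphi\in\Z_{\geq 0}^{[n]}$ with $\varphi(1)+\cdots+\varphi(n)\leq J$) shows that for each fixed $n$ and $J$ there is a constant $C=C(k,n,J)$ with
\[
\lv {}_k\delta_{n,J}(X)-\mathbf P[S_n\leq J]\rv\leq \frac{C}{\sqrt X}\qquad\text{for every }X\in\Z_{\geq 1}.
\]
Since this bound is effective in $X$, for any $\gamma$ I can build an increasing $\alpha\colon\Z_{\geq 0}\to\Z_{\geq 0}$ with $\alpha(J)\geq C(k,\gamma(J),J)^2J^2$, so that ${}_k\delta_{\gamma(J),J}(\alpha(J))=\mathbf P[S_{\gamma(J)}\leq J]+O(1/J)$. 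It therefore suffices to prove $\mathbf P[S_{\gamma(J)}\leq J]\to 0$ under the first hypothesis and $\mathbf P[S_{\gamma(J)}\leq J]\to 1$ under the second.

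Next I would record the first two moments of $S_n$. By \cref{momentswecareabout}, $\M_1({}_k\omega_i)=1/i$ for every $i$ (since $1\leq r_k(i)$), so $\E S_n=H_n:=\sum_{i=1}^n 1/i=\log n+O(1)$. That corollary also gives $\M_2({}_k\omega_i)=\M_2(\rho_{1/i})$ whenever $r_k(i)\geq 2$, which by \cref{rknfax} holds for every pair $(k,i)$ except $(k,i)=(2,2)$; in that single case \cref{momenttheorem} evaluates $\M_2({}_2\omega_2)=\tfrac12$ directly. In all cases $\mathrm{Var}(X_i)\leq 1/i$, hence $\mathrm{Var}(S_n)\leq H_n$.

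For the first bullet, the hypothesis $J=o(\log\gamma(J))$ forces $\gamma(J)\to\infty$ and $J/H_{\gamma(J)}\to 0$, so $J<\tfrac12 H_{\gamma(J)}$ for large $J$, and Chebyshev's inequality gives
\[
\mathbf P\lc S_{\gamma(J)}\leq J\rc\leq \mathbf P\lc S_{\gamma(J)}\leq \tfrac12 H_{\gamma(J)}\rc\leq \frac{4\,\mathrm{Var}(S_{\gamma(J)})}{H_{\gamma(J)}^2}\leq \frac{4}{H_{\gamma(J)}}\longrightarrow 0.
\]
For the second bullet, $H_{\gamma(J)}=\log\gamma(J)+O(1)=O(J^\epsilon)$ with $\epsilon<1$, so Markov's inequality gives $\mathbf P[S_{\gamma(J)}>J]\leq H_{\gamma(J)}/(J+1)=O(J^{\epsilon-1})\to 0$, i.e.\ $\mathbf P[S_{\gamma(J)}\leq J]\to 1$. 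In both cases the reduction above then produces the required $\alpha$.

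I do not expect a deep obstacle: the argument rests only on the moment formula \cref{momentswecareabout}, the effective error term of \cref{cycledistribution}, and elementary concentration. The two points that need care are the lone pair $(k,i)=(2,2)$ with $r_k(i)<2$, where the second moment is not the Poisson one (handled by \cref{momenttheorem}), and the construction of $\alpha$, which genuinely uses the quantitative $O(X^{-1/2})$ rate rather than mere convergence in $X$. One could even sharpen the first bullet: since the events $\{X_i\geq 1\}$ are independent and $\sum_{i=1}^n(1-{}_k\omega_i(0))=\log n+O(1)$ (by \cref{fixedpointprob}), a Chernoff bound applied to $\sum_{i\leq n}\mathbbm{1}[X_i\geq 1]\leq S_n$ gives $\mathbf P[S_n\leq J]\ll n^{-c}$ for a constant $c>0$ whenever $J\leq(1-\delta)\log n$. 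What these crude inequalities do \emph{not} resolve --- and this is presumably why \cref{whatisgamma} is stated as a conjecture --- is the intermediate regime $\log\gamma(J)\asymp J$ that lies between the two hypotheses: there $\E S_{\gamma(J)}\asymp J$, and the value of $\lim_{J\to\infty}{}_k\delta_{\gamma(J),J}(\alpha(J))$ depends on implied constants and on fluctuations at the scale of the standard deviation.
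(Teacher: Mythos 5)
The statement you are proving is \cref{whatisgamma}, which the paper states as a \emph{conjecture} and does not prove, so there is no argument of the authors to compare against; I can only assess your proposal on its own terms. As a proof of the statement as literally written, your argument looks correct. The identification $s_{j,n}=\mathbf{P}[S_n=j]$ for independent $X_i\sim{}_k\omega_i$ is exactly the convolution recursion used in the proof of \cref{thm:final}; \cref{oldbox} together with \cref{cycledistribution} does give, for each fixed pair $(n,J)$, that ${}_k\delta_{n,J}(X)\to\mathbf{P}[S_n\leq J]$ as $X\to\infty$; and since the conjecture only asserts the \emph{existence} of an increasing $\alpha$, you may choose $\alpha(J)$ so large (depending on $\gamma(J)$ and $J$) that ${}_k\delta_{\gamma(J),J}(\alpha(J))=\mathbf{P}[S_{\gamma(J)}\leq J]+o(1)$. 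The moment bookkeeping is right: $\M_1({}_k\omega_i)=1/i$ always, the variance is at most $1/i$ in every case including the lone exception $(k,i)=(2,2)$ where $r_2(2)=1$, so $\E S_n=H_n$ and $\mathrm{Var}(S_n)\leq H_n$, and Chebyshev (first bullet, since $J=o(\log\gamma(J))$ forces $J<\tfrac12 H_{\gamma(J)}$ eventually) and Markov (second bullet, since $H_{\gamma(J)}=O(J^\epsilon)=o(J)$) finish the two cases. Two small corrections: the quantitative $O(X^{-1/2})$ rate of \cref{cycledistribution} is \emph{not} genuinely needed, since $\alpha(J)$ is allowed to depend arbitrarily on the fixed pair $(\gamma(J),J)$, mere pointwise convergence in $X$ suffices; and you should dispose of the trivial edge case in the second bullet where $\gamma$ is bounded (or takes the value $0$, for which ${}_k\delta_{n,J}$ is only defined when $n\geq1$), which is harmless.

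What your argument exposes, and what you should state explicitly, is that the decoupling of $\alpha$ from $\gamma$ makes the literal statement much weaker than the situation in \cref{lotsofcycles}, where the \emph{same} function occupies both slots of ${}_k\delta_{\gamma(J),J}(\gamma(J))$. If the intended conjecture ties $\alpha$ to $\gamma$ (for example $\alpha=\gamma$, or $\alpha$ of controlled growth), your reduction breaks down: the constant in the $O_{k,\mathcal{N}}(1/\sqrt{X})$ error of \cref{cycledistribution} comes from the Hilbert irreducibility thin sets and is completely uncontrolled as $n=\gamma(J)$ and $J$ grow, so one would need uniformity that the paper's methods do not provide. That, together with the genuinely delicate intermediate regime $\log\gamma(J)\asymp J$ you correctly flag (where $\E S_{\gamma(J)}\asymp J$ and the answer depends on constants and fluctuations), is presumably the substance the authors had in mind; but for the statement as printed, your Chebyshev--Markov argument settles both bullets.
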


\section*{Acknowledgements} \label{Acknowledgements}

The authors would like to thank Rafe Jones for many useful discussions and for directing us to pertinent background literature.
We also thank Roger Heath-Brown for posing a version of the problem studied in this paper, as well as for helpful conversations on this topic.
Finally, we thank Igor Shparlinski for his comments on an early draft of this paper.

\bibliography{../../CycleStructure}

\newcommand{\etalchar}[1]{$^{#1}$}
\providecommand{\bysame}{\leavevmode\hbox to3em{\hrulefill}\thinspace}
\providecommand{\MR}{\relax\ifhmode\unskip\space\fi MR }
\providecommand{\MRhref}[2]{%
  \href{http://www.ams.org/mathscinet-getitem?mr=#1}{#2}
}
\providecommand{\href}[2]{#2}
\begin{thebibliography}{BGTW18}

\bibitem[AB82]{ArneyBender}
James Arney and Edward~A. Bender,
  \emph{\href{http://projecteuclid.org/euclid.pjm/1102723961}{Random mappings
  with constraints on coalescence and number of origins}}, Pacific J. Math.
  \textbf{103} (1982), no.~2, 269--294. \MR{705228}

\bibitem[Bac91]{Bach}
Eric Bach, \emph{\href{http://dx.doi.org/10.1016/0890-5401(91)90001-I}{Toward a
  theory of {P}ollard's rho method}}, Inform. and Comput. \textbf{90} (1991),
  no.~2, 139--155. \MR{1094034}

\bibitem[BG17]{BG}
Andrew Bridy and Derek Garton,
  \emph{\href{http://dx.doi.org/10.1186/s40687-017-0103-3}{Dynamically
  distinguishing polynomials}}, Res. Math. Sci. \textbf{4} (2017), no.~4,
  1--17. \MR{3669394}

\bibitem[BGH{\etalchar{+}}13]{BGHKST}
Robert~L. Benedetto, Dragos Ghioca, Benjamin Hutz, P\"ar Kurlberg, Thomas
  Scanlon, and Thomas~J. Tucker,
  \emph{\href{http://dx.doi.org/10.1007/s00208-012-0799-8}{Periods of rational
  maps modulo primes}}, Math. Ann. \textbf{355} (2013), no.~2, 637--660.
  \MR{3010142}

\bibitem[BGTW18]{BGTW}
Elisa Bellah, Derek Garton, Erin Tannenbaum, and Noah Walton,
  \emph{\href{http://dx.doi.org/10.2140/involve.2018.11.169}{A probabilistic
  heuristic for counting components of functional graphs of polynomials over
  finite fields}}, Involve \textbf{11} (2018), no.~1, 169--179. \MR{3681355}

\bibitem[BJ07]{BostonJones}
Nigel Boston and Rafe Jones,
  \emph{\href{http://dx.doi.org/10.1007/s10711-006-9113-9}{Arboreal {G}alois
  representations}}, Geom. Dedicata \textbf{124} (2007), 27--35.

\bibitem[BP81]{BP}
Richard~P. Brent and John~M. Pollard,
  \emph{\href{http://dx.doi.org/10.2307/2007666}{Factorization of the eighth
  {F}ermat number}}, Math. Comp. \textbf{36} (1981), no.~154, 627--630.
  \MR{606520}

\bibitem[BS17]{BS}
Charles Burnette and Eric Schmutz,
  \emph{\href{http://dx.doi.org/10.1142/S1793042117500713}{Periods of iterated
  rational functions}}, Int. J. Number Theory \textbf{13} (2017), no.~5,
  1301--1315. \MR{3639698}

\bibitem[CM12]{ChowMansour}
Chak-On Chow and Toufik Mansour,
  \emph{\href{http://web.math.rochester.edu/misc/ojac/vol7/Mansour_2012.pdf}{Asymptotic
  probability distributions of some permutation statistics for the wreath
  product {$C_r\wr S_n$}}}, Online J. Anal. Comb. (2012), no.~7, 14.
  \MR{3016122}

\bibitem[Coh81]{CohenHIT}
S.~D. Cohen, \emph{\href{http://dx.doi.org/10.1112/plms/s3-43.2.227}{The
  distribution of {G}alois groups and {H}ilbert's irreducibility theorem}},
  Proc. London Math. Soc. (3) \textbf{43} (1981), no.~2, 227--250. \MR{628276}

\bibitem[FG14]{FG}
Ryan Flynn and Derek Garton,
  \emph{\href{http://dx.doi.org/10.1142/S1793042113501224}{Graph components and
  dynamics over finite fields}}, Int. J. Number Theory \textbf{10} (2014),
  no.~3, 779--792. \MR{3190008}

\bibitem[FO90]{FlajoletOdlyzko}
Philippe Flajolet and Andrew~M. Odlyzko,
  \emph{\href{http://dx.doi.org/10.1007/3-540-46885-4_34}{Random mapping
  statistics}}, Advances in cryptology---{EUROCRYPT} '89 ({H}outhalen, 1989),
  Lecture Notes in Comput. Sci., vol. 434, Springer, Berlin, 1990,
  pp.~329--354. \MR{1083961}

\bibitem[Gon44]{Gon}
V.~Gontcharoff, \emph{\href{http://mi.mathnet.ru/eng/izv3716}{Du domaine de
  l'analyse combinatoire}}, Bull. Acad. Sci. URSS S\'er. Math. [Izvestia Akad.
  Nauk SSSR] \textbf{8} (1944), 3--48. \MR{0010922}

\bibitem[Gon62]{GonTran}
V.~Gon\v{c}arov, \emph{\href{http://bookstore.ams.org/trans2-19/}{On the field
  of combinatory analysis}}, Amer. Math. Soc. Transl. (2) \textbf{19} (1962),
  1--46. \MR{0131369}

\bibitem[Har60]{Harris}
Bernard Harris,
  \emph{\href{https://doi.org/10.1214/aoms/1177705677}{Probability
  distributions related to random mappings}}, Ann. Math. Statist. \textbf{31}
  (1960), 1045--1062. \MR{0119227}

\bibitem[HB17]{Heath-Brown}
D.~R. Heath-Brown,
  \emph{\href{https://doi.org/10.1112/S0025579317000328}{Iteration of Quadratic
  Polynomials Over Finite Fields}}, Mathematika \textbf{63} (2017), no.~3,
  1041--1059. \MR{3731313}

\bibitem[Isa06]{IsaacsCharacterTheory}
I.~Martin Isaacs, \emph{\href{http://dx.doi.org/10.1090/chel/359}{Character
  theory of finite groups}}, AMS Chelsea Publishing, Providence, RI, 2006,
  Corrected reprint of the 1976 original [Academic Press, New York; MR0460423].
  \MR{2270898}

\bibitem[Isa08]{IsaacsGroupTheory}
\bysame, \emph{\href{http://dx.doi.org/10.1090/gsm/092}{Finite group theory}},
  Graduate Studies in Mathematics, vol.~92, American Mathematical Society,
  Providence, RI, 2008. \MR{2426855}

\bibitem[JKMT16]{JKMT}
Jamie Juul, P\"ar Kurlberg, Kalyani Madhu, and Tom~J. Tucker,
  \emph{\href{http://dx.doi.org/10.1093/imrn/rnv273}{Wreath products and
  proportions of periodic points}}, Int. Math. Res. Not. IMRN (2016), no.~13,
  3944--3969. \MR{3544625}

\bibitem[Kru54]{Kruskal}
Martin~D. Kruskal, \emph{\href{https://doi.org/10.2307/2307900}{The expected
  number of components under a random mapping function}}, Amer. Math. Monthly
  \textbf{61} (1954), 392--397. \MR{0062973 (16,52b)}

\bibitem[Lan83]{LangDiophantineGeometry}
Serge Lang,
  \emph{\href{http://dx.doi.org/10.1007/978-1-4757-1810-2}{Fundamentals of
  {D}iophantine geometry}}, Springer-Verlag, New York, 1983. \MR{715605}

\bibitem[Mor96]{MorCurves}
Patrick Morton,
  \emph{\href{http://www.numdam.org/item?id=CM_1996__103_3_319_0}{On certain
  algebraic curves related to polynomial maps}}, Compositio Math. \textbf{103}
  (1996), no.~3, 319--350. \MR{1414593 (97m:14030)}

\bibitem[Mor98]{MorGalGroups}
\bysame, \emph{\href{http://dx.doi.org/10.1006/jabr.1997.7304}{Galois groups of
  periodic points}}, J. Algebra \textbf{201} (1998), no.~2, 401--428.
  \MR{1612390}

\bibitem[Mor11]{MorCorrection}
\bysame, \emph{\href{http://doi.org/10.1112/S0010437X1000480X}{Corrigendum:
  `{O}n certain algebraic curves related to polynomial maps, {C}ompositio
  {M}ath. 103 (1996), 319--350'}}, Compos. Math. \textbf{147} (2011), no.~1,
  332--334. \MR{2771135}

\bibitem[MP94]{MP}
Patrick Morton and Pratiksha Patel,
  \emph{\href{http://dx.doi.org/10.1112/plms/s3-68.2.225}{The {G}alois theory
  of periodic points of polynomial maps}}, Proc. London Math. Soc. (3)
  \textbf{68} (1994), no.~2, 225--263. \MR{1253503}

\bibitem[MSSS]{MSSS}
B.~{Mans}, M.~{Sha}, I.~E. {Shparlinski}, and D.~{Sutantyo},
  \emph{\href{https://doi.org/10.1080/10586458.2017.1391725}{On Functional
  Graphs of Quadratic Polynomials}}, to appear in Exp. Math.

\bibitem[Odo85]{Odoni}
R.~W.~K. Odoni, \emph{\href{http://dx.doi.org/10.1112/plms/s3-51.3.385}{The
  {G}alois theory of iterates and composites of polynomials}}, Proc. London
  Math. Soc. (3) \textbf{51} (1985), no.~3, 385--414.

\bibitem[Pol75]{Pollard}
J.~M. Pollard, \emph{\href{http://dx.doi.org/10.1007/BF01933667}{A {M}onte
  {C}arlo method for factorization}}, Nordisk Tidskr. Informationsbehandling
  (BIT) \textbf{15} (1975), no.~3, 331--334. \MR{0392798}

\bibitem[Ser97]{Serre}
Jean-Pierre Serre,
  \emph{\href{http://dx.doi.org/10.1007/978-3-663-10632-6}{Lectures on the
  {M}ordell-{W}eil theorem}}, third ed., Aspects of Mathematics, Friedr. Vieweg
  \& Sohn, Braunschweig, 1997, Translated from the French and edited by Martin
  Brown from notes by Michel Waldschmidt, With a foreword by Brown and Serre.
  \MR{1757192}

\bibitem[Sil07]{SilvermanADS}
Joseph~H. Silverman,
  \emph{\href{http://dx.doi.org/10.1007/978-0-387-69904-2}{The arithmetic of
  dynamical systems}}, Graduate Texts in Mathematics, vol. 241, Springer, New
  York, 2007. \MR{2316407}

\bibitem[Sil08]{SilvermanVariationmodp}
\bysame, \emph{\href{http://nyjm.albany.edu/j/2008/14_601.html}{Variation of
  periods modulo {$p$} in arithmetic dynamics}}, New York J. Math. \textbf{14}
  (2008), 601--616. \MR{2448661}

\bibitem[SL96]{SL}
P.~Stevenhagen and H.~W. Lenstra, Jr.,
  \emph{\href{http://websites.math.leidenuniv.nl/algebra/chebotarev.pdf}{Chebotar\"ev
  and his density theorem}}, Math. Intelligencer \textbf{18} (1996), no.~2,
  26--37. \MR{1395088}

\bibitem[Ste69]{Stepanov}
V.~E. Stepanov, \emph{\href{http://doi.org/10.1137/1114078}{Limit distributions
  of certain characteristics of random mappings}}, Teor. Verojatnost. i
  Primenen. \textbf{14} (1969), 639--653. \MR{0278350}

\bibitem[V{\"o}l96]{Volklein}
Helmut V{\"o}lklein,
  \emph{\href{http://dx.doi.org/10.1017/CBO9780511471117}{Groups as {G}alois
  groups}}, Cambridge Studies in Advanced Mathematics, vol.~53, Cambridge
  University Press, Cambridge, 1996, An introduction. \MR{1405612}

\end{thebibliography}
\bibliographystyle{amsalpha}

\end{document}